\pdfoutput=1

\documentclass[12pt]{article}
\usepackage[utf8]{inputenc}
\usepackage{amsmath}
\usepackage{amsthm}
\usepackage{amscd}
\usepackage{comment}
\usepackage[english]{babel}
\usepackage{tikz}
\usepackage{color,graphicx,amssymb,tikz-cd,wrapfig}
\usepackage{amsmath,amsthm,amsfonts,amssymb,stmaryrd}
\usepackage{bm}
\usepackage[rightcaption]{sidecap}
\usepackage{geometry}
\usepackage{hyperref}
\usepackage[figurename=Fig.]{caption}

\usepackage[T1]{fontenc}
\usepackage{combelow}
\usepackage{newunicodechar}
\newunicodechar{ț}{\cb{t}}

\usepackage{lipsum}   

\usepackage{xcolor}

\usepackage{enumerate}
\usepackage{pb-diagram}
\usepackage{graphicx} 
\usepackage[all]{xy}
\usepackage{appendix}
\usetikzlibrary{intersections,calc,arrows.meta}

\title{Warped cones associated to isometric free actions do not have geometric property (T)}

\author{Ryo Toyota\thanks{Department of Mathematical Sciences, University of Copenhagen, Universitetsparken 5, DK-2100, Copenhagen \O, Denmark}}%
\date{ }

\newtheorem{thm}{Theorem}[section]
\newtheorem{dfn}[thm]{Definition}
\newtheorem{lem}[thm]{Lemma}

\newtheorem{prop}[thm]{Proposition}

\newtheorem{qustn}[thm]{Question}

\newcommand{\HH}{{\mathcal H}}

\newcommand{\ControlledSupport}{{\mathbb{C}_{\text{cs}}}}
\newcommand{\ad}{\text{ad}}

\newcommand{\Isom}{\text{Isom}}

\newcommand{\sch}{\text{Sch}}
\newcommand{\alg}{\text{alg}}

\begin{document}

\maketitle

\begin{abstract}
    Suppose that $G$ is a finitely generated group, $M$ is a compact Riemannian manifold and $G\curvearrowright M$ is a free and isometric action. We prove that the associated discretized warped cone does not have geometric property (T). 
    
    In contrast, we show that the discretized warped cone associated to the natural action
$SL_m(\mathbb Z)\curvearrowright \mathbb T^m$ has geometric property (T) for $m\geq 3$. 

We also prove that, for free probability-measure-preserving Lipschitz
actions on compact Riemannian manifolds, geometric property (T) of
the discretized warped cone implies Kazhdan’s property (T) of the acting group.
\end{abstract}

\tableofcontents

\section{Introduction}
Geometric property (T) is a coarse invariant introduced by Willett and Yu \cite{WillettYu2012higher, Willett2014Geometric} and characterized by the existence of a spectral gap for the graph Laplacian in the maximal uniform Roe algebra.
For a sequence of finite graphs $\{X_n\}_n$, geometric property (T) is strictly stronger than being an expander and provides an obstruction to the surjectivity of the maximal coarse Baum--Connes assembly map. In particular, if $G$ is a residually finite group and $\{N_j\}_j$ is a decreasing sequence of finite-index normal subgroups satisfying $\bigcap_j N_j=\{e\}$, then the box space $\bigsqcup G/N_j$ has geometric property (T) if and only if $G$ has Kazhdan's property (T).

Warped cones, introduced by Roe \cite{Roe2005Warped}, associate metric spaces to actions of finitely generated groups on compact metric spaces, and their large-scale geometry reflects dynamical properties of the underlying actions. This construction generalizes the box spaces discussed above: the box space associated to a chain $\{N_j\}_j$ can be realized as the warped cone associated to the action of $G$ on the inverse limit $\varprojlim_j (G/N_j)$ \cite{Sawicki2018Warpedprofinite}.
Roe proved that warped cones associated to Lipschitz actions on compact
Riemannian manifolds have bounded geometry. He also showed that amenability
of the action implies property A of the warped cone. For free actions, the
converse was proved in \cite{Sawicki2021Straightening}. See also
\cite{Drutu2019Kazhdan,FisherNguyenLimbeek2019Rigidity,deLaatVigoro2019Superexpanders,VigoloDiscreteFundamentalGroups, Sawicki2019Warpedrigidity,SawickiSuperexpandersandWarpedCones} for coarse geometry of warped cones. 

On the other hand, Vigolo \cite{Federico2019Measure} proved that, for a
probability-measure-preserving Lipschitz action on a compact Riemannian
manifold, the associated discretized warped cone forms an expander if and
only if the action has spectral gap. This motivates the question of how stronger rigidity properties
of group actions are reflected in the coarse geometry of their
warped cones. In
particular, since warped cones generalize box spaces and Kazhdan's property
(T) of a group is characterized by geometric property (T) of its box
spaces, it is natural to ask whether an analogous connection persists for
more general warped cones. This led to the expectation that sufficiently
well-behaved actions of groups with Kazhdan's property (T) might produce
warped cones with geometric property (T). This was formulated by Winkel as
the following question.
\begin{qustn}\label{Winkel Question}[\cite{Jeroen2021Geometric} Question~11.2]
If $G$ has property (T), $M$ is a compact Riemannian manifold and $G\curvearrowright M$ is an ergodic action by diffeomorphisms, then does the warped cone have geometric property (T)? 
\end{qustn}

Deng and the author gave a negative answer to this question by establishing the following result.

\begin{thm}[\cite{DengToyotaGeometric} Theorem~1.2]
    Let $G$ be a finitely generated group which is dense in a compact Lie group $\overline{G}=M$ and $G\curvearrowright M$ be an action induced by the left multiplication. In this case, for any sequence $\{t(n)\}_{n \in \mathbb{N}}$ converging to infinity, the warped cone $\bigsqcup (X_{t(n)},\delta_G^{t(n)})$ associated to $G\curvearrowright M$ does not have geometric property (T).
\end{thm}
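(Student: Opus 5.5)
The plan is to use the Willett--Yu characterization. A sequence of uniformly bounded-degree spaces $X_n$ has geometric property (T) if and only if the Laplacian $\Delta$ has a uniform spectral gap in the maximal uniform Roe algebra. Equivalently, in every unitary representation $\pi$ of the algebraic uniform Roe algebra, $\pi(\Delta)$ has spectrum contained in $\{0\}\cup[c,\infty)$ for some $c>0$ independent of $\pi$. To violate this, I will construct representations $\pi_k$ of $\mathbb{C}[\bigsqcup X_{t(n)}]$ and unit vectors $v_k$ orthogonal to $\ker\pi_k(\Delta)$ such that $\langle\pi_k(\Delta)v_k,v_k\rangle\to 0$.

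The representations come from the group structure of $M=\overline{G}$. The right multiplication action of the Lie group $M$ on itself commutes with the left action of $G$. It is therefore an isometry of every warped metric $\delta_G^{t}$, and it preserves the level-$t$ cones. Concretely, I fix a finite-dimensional irreducible unitary representation $\rho$ of $M$, or more cleverly a family $\rho_k$ of them. I then build a "twisted" representation of the Roe algebra on $\ell^2(X_t)\otimes V_\rho$, letting a finite-propagation operator $T$ act by $T\otimes 1$ conjugated by a $\rho$-valued gauge: the phase attached to an edge $(x,y)$ is $\rho$ evaluated on the group element relating $x$ and $y$. Because $M$ is a Lie group, nearby points $x,y$ in the warped cone (coarse edges correspond to generator moves $x\mapsto sx$ and to short Riemannian moves of size $\lesssim 1/t$) admit a canonical small element $g_{xy}=yx^{-1}$ with $\rho(g_{xy})$ close to $1$ for Riemannian steps. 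For generator steps, $g_{xy}=s$ is exactly $\rho(s)$. Because left and right multiplication commute, the cocycle condition $g_{yz}g_{xy}=g_{xz}$ holds on the nose. This gives a genuine $*$-representation, namely conjugation of $\ell^2(X_t,V_\rho)$ by the unitary $f\mapsto \rho(x)f(x)$, restricted appropriately, or induced from the ultraproduct as $t\to\infty$.

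Next I compute $\pi(\Delta)$. After gauging, the generator part of the Laplacian becomes $\sum_s(1-\rho(s))$-type terms, and the Riemannian part becomes an almost-flat connection Laplacian. The kernel of the gauged Laplacian is trivial whenever $\rho$ is nontrivial irreducible. This is because constants are no longer harmonic: the kernel corresponds to $\rho$-invariant vectors of the dense subgroup $G$, hence of $M$, and those vanish. The test vector is $v=f\otimes\xi$, with $f$ a normalized constant on $X_t$ and $\xi\in V_\rho$. Its energy is approximately $\sum_s\|(1-\rho(s))\xi\|^2$ plus an $O(\|d\rho\|^2/t^2)$ Riemannian error. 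I then pick a sequence $\rho_k$ and vectors $\xi_k$ with $\sum_s\|\rho_k(s)\xi_k-\xi_k\|^2\to 0$, and choose the scales $t(n_k)$ large relative to $\|d\rho_k\|$ so the error vanishes. Such almost-invariant vectors exist because $M$ is a compact group and $G$ is dense. The regular representation $L^2(M)$ contains almost-$G$-invariant vectors, namely normalized bump functions near the identity that are Lipschitz and nearly invariant under each $s$ only if each $s$ is near $e$. That fails in general, so instead I would take functions of slow variation in $L^2(M)\ominus\mathbb{C}$ and use the fact that a compact group has no spectral gap for the regular representation restricted to a countable dense subgroup's generators only if it is not (T)-like. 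This is the main obstacle. In general one must use that the spectral gap of the graph Laplacian on $X_t$ is a fixed positive constant, while in the maximal algebra the infimum over all representations must remain positive. I would therefore instead produce the almost-invariant vectors from the infinite-dimensional representation $\bigoplus_k\rho_k$ via the ultraproduct of $X_{t(n)}$. There, Riemannian steps of size $1/t\to0$ become exactly trivial, $M$ effectively becomes a quotient by infinitesimals, and $\rho$-phases decouple. Showing that the projection onto the kernel is not in the limit algebra, equivalently that $0$ is not isolated, is the crux I expect to require the most care.
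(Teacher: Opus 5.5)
\textbf{There is a genuine gap, and it sits exactly at the step you call ``the main obstacle''. Your suggested ways around it cannot close it.}

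First, the literal gauge construction does nothing. Conjugating the whole representation by $Wf(x)=\rho(x)f(x)$ gives a representation unitarily equivalent to $\dim V_\rho$ copies of the standard one. It therefore has the same spectrum as the warped-cone Laplacian, and its kernel contains $W(\mathbb{C}\mathbf 1\otimes V_\rho)\neq 0$, contrary to your claim that the kernel is trivial.

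What your energy computation actually uses is the covariant pair $T\mapsto T\otimes 1$, $g\mapsto\lambda(g)\otimes\rho(g)$. This represents $\mathbb{C}_{\mathrm{cs}}[Y]\rtimes_{\mathrm{alg}}G$, and it gives a representation of $\mathbb{C}_{\mathrm{cs}}[X]$ only modulo $I=\bigoplus_n B(L^2(X_{t(n)}))$, via the isomorphism for free actions. For this pair, $L_r\mathbf 1=0$, so the image of $\mathbf 1\otimes\xi$ is exactly $\phi\,\mathbf 1\otimes\sum_s(\xi-\rho(s)\xi)$. You therefore need nontrivial irreducibles $\rho_k$ of $M$ with almost $G$-invariant unit vectors.

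Since $L^2_0(M)\cong\bigoplus_{\rho\neq1}\rho^{\oplus\dim\rho}$ as $G$-representations, such $\rho_k,\xi_k$ exist if and only if $G\curvearrowright M$ has no spectral gap. In that case the warped cone is not even an expander \cite{Federico2019Measure}, and the theorem is trivial. In the cases the theorem is really about, there are no such vectors; an example is $SO(d,\mathbb{Z}[1/5])\subset SO(d)$ with $d\ge5$, which has property (T). Passing to $\bigoplus_k\rho_k$ or to ultraproducts does not help, because a uniform bound $\max_{s\in S}\|\rho(s)\xi-\xi\|\ge\kappa\|\xi\|$ survives both. So the claim that almost-invariant vectors exist ``because $M$ is compact and $G$ is dense'' is false. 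Your strategy is essentially the proof of Theorem~\ref{if not kazhdan}, which genuinely needs $G$ to fail property (T).

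\textbf{The missing idea} is to use \emph{exactly} $G$-invariant vectors. Then the group part of $\widetilde{\Delta}_{E_r}=|S|\phi-(\phi-L_r)\otimes(|S|-\Delta_G)$ disappears: on invariant vectors it equals $|S|L_r$. The absence of a gap must then come from the \emph{unwarped} local Laplacian.

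The paper does this with Hilbert--Schmidt kernels on $X_{t(n)}\times X_{t(n)}$, with $G$ acting by conjugation and $\overline{\mathbb{C}_{\mathrm{cs}}[Y]}$ acting by left multiplication. By density of $G$, the invariant kernels are identified with $L^2(M)$ via $k\mapsto\sqrt{\mu_{t(n)}(M)}\,k(\cdot,x_0)$. This identification intertwines $\pi(L_r)$ with $L_{r,n}$ (Lemma~\ref{intertwine} with trivial stabilizer). Next, $L_{r,n}$ is compared with $1-\exp(-\Delta_M/t(n)^2)$. Weyl's law then gives, for each $\varepsilon>0$, a point of $(0,\varepsilon)$ that is a limit of spectral values from infinitely many levels. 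Such a point lies in the spectrum in $B(\HH_G)/\bigoplus_n B(\HH_G^{(n)})$.

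Positive accumulation points are genuinely needed here. The representation of $\mathbb{C}_{\mathrm{cs}}[X]$ only exists after quotienting by $\bigoplus_n B(\HH^{(n)})$, so level-wise eigenvalues that merely tend to $0$ only put $0$ itself into the quotient spectrum.

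In your own language, the fix is this:
\begin{itemize}
\item Take $\rho$ to be the regular representation of $M$.
\item Use the test vectors $x\mapsto\rho(x)\xi$, which are exactly invariant under $\lambda\otimes\rho$.
\item Their energy is only the connection term of order $\|d\rho\|^2/t(n)^2$. You treated this term as an error, but it is in fact the whole signal.
\item Letting the frequency of $\rho$ grow comparably with $t(n)$ produces the required spectrum in $(0,\varepsilon)$.
\end{itemize}
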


In particular, the theorem applies to the ergodic action $SO\left(d,\mathbb Z\left[\frac15\right]\right)\curvearrowright SO(d)$. Since $SO(d,\mathbb Z[1/5])$ has Kazhdan's property (T) for $d\geq 5$ \cite{Margulis1980Some}, this provides a counterexample to Question~1.1. The result, however, concerns the special case in which the underlying manifold is a compact Lie group, leaving open the case of general isometric actions.

The main result of the present paper substantially extends the previous result from translation actions on compact Lie groups to arbitrary free and isometric actions on compact Riemannian manifolds.

\begin{thm}\label{main}
    Suppose that $G$ is a finitely generated group, $M$ is a compact Riemannian manifold and $G\curvearrowright M$ is a free and isometric action. Then the associated warped cone does not have geometric property (T). 
\end{thm}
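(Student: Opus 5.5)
Our strategy is to reduce to the Lie group case treated in \cite{DengToyotaGeometric}, or rather to reproduce its mechanism in the present generality. Since the action is isometric, the closure $K=\overline{\rho(G)}\subset\Isom(M)$ of the image of $G$ in the isometry group is a compact Lie group by Myers--Steenrod. The $G$-orbit closures in $M$ are then $K$-orbits, and since $M$ is compact the warped cone metric restricted to each orbit closure is controlled by the metric of $K$. The plan is to show that the failure of geometric property (T) is detected by a $K$-equivariant structure coming from $L^2(M)$. In the Lie group case this worked by decomposing $L^2(K)$ into isotypic components via Peter--Weyl.

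The key steps are as follows.

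\textbf{Step 1.} Decompose $L^2(M)=\bigoplus_{\pi\in\widehat K}L^2(M)_\pi$ into finite-multiplicity-per-fibre isotypic components of $K$. The main point is to construct, at scale $t$, a family of nonzero projections or almost-invariant vectors in the maximal uniform Roe algebra of the discretized warped cone $X_t$. These should be orthogonal to the constants, yet be almost annihilated by the Laplacian $\Delta$ in the maximal norm. Concretely, we aim to find representations of the maximal Roe algebra $C^*_{u,\max}(\bigsqcup X_{t(n)})$ in which $\Delta$ has spectrum accumulating at $0$ through nonzero values. Alternatively, we find representations with kernel vectors that are not attained by the kernel projection of the Roe algebra itself.

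\textbf{Step 2.} Use freeness. Locally, $G$-orbits look like orbits of a dense subgroup in $K$-orbits, and a neighbourhood of a $K$-orbit fibres over a transversal slice $S$. So locally $M\approx K/H\times S$, where freeness of $G$ forces the stabiliser $H$ to meet $\rho(G)$ trivially. At large warping scale $t$, the warped metric along the $S$-direction blows up like $t\cdot d_M$, while along orbits it remains bounded by the word length. Hence $X_t$ is coarsely a union of rescaled copies of warped cones over $K/H$, varying continuously in the slice parameter. The plan is to build a $*$-homomorphism from $C^*_{u,\max}(\bigsqcup X_{t(n)})$ to a $C^*$-algebra of the form $\prod_n\mathcal B(L^2(\text{slice pieces}))\otimes C^*_{\max}$ of the warped cone over $K$, or over $K/H$. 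The image of $\Delta$ should retain the non-gap behaviour established in the translation case. Because $S$-directions are infinitely stretched, the pieces behave like box-space-type families indexed by the slice. Non-constant functions on $S$ that vary slowly, at scale $t$, give vectors $f$ with $\langle\Delta f,f\rangle\to0$ that are not close to the kernel in the maximal completion.

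\textbf{Step 3.} Conclude via the characterization: geometric property (T) would force $\Delta$ to have spectral gap in the maximal uniform Roe algebra. The vectors from Step 2 contradict this.

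We expect the main obstacle to be Step 2. The difficulty is making the non-gap argument work in the \emph{maximal} Roe algebra rather than merely showing non-expansion. If the action is not ergodic, slowly varying functions on the orbit space already kill even the expander property, and this case is easy. The hard case is the ergodic one, where $K$ acts transitively, so $M=K/H$. There we must adapt the argument of \cite{DengToyotaGeometric} from $K$ to the homogeneous space $K/H$. Our first attempt would be to pull back along $K\to K/H$, using freeness of $G$ to compare the warped cones over $K$ and $K/H$ coarsely fibrewise, and then to transport the non-(T) witnesses.
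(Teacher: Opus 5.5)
Your proposal correctly isolates the ergodic case as the whole difficulty, but it gives no mechanism for that case, so there is a genuine gap. In that case $K=\overline{\alpha(G)}\subseteq\Isom(M)$ is transitive and $M=K/H$; the non-ergodic case follows at once from Vigolo's theorem, since the cone is not even an expander. In the ergodic case the cone can be an expander, so no vectors in the standard representation on $\bigoplus_n L^2(M,\mu_{t(n)})$ can witness the failure. One needs a non-standard representation of $\ControlledSupport[X]$ in which $\Delta_{E_r}$ has spectrum accumulating at $0$ even modulo $I=\bigoplus_n B(L^2(M,\mu_{t(n)}))$. The paper builds it in four steps:
\begin{enumerate}[1)]
\item Freeness enters only through $\ControlledSupport[X]/I\cong(\ControlledSupport[Y]/I)\rtimes_{\alg}G$, under which $\Delta_{E_r}$ lifts to $|S|\phi-(\phi-L_r)\otimes(|S|-\Delta_G)$. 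So a covariant system for $\ad\colon G\curvearrowright\overline{\ControlledSupport[Y]}^{L^2}$ suffices.
\item On Hilbert--Schmidt kernels on $M\times M$ (with $G$ acting by conjugation and the Roe algebra by left multiplication), the lifted Laplacian acts on $G$-fixed kernels as $|S|\pi(L_r)$.
\item The $G$-fixed kernels are exactly the $K$-invariant ones. The map $k\mapsto\sqrt{\mu_{t(n)}(M)}\,k(\cdot,x_0)$ identifies them unitarily with $L^2_H(M)$, intertwining $\pi(L_r)$ with $L_{r,n}|_{L^2_H(M)}$.
\item One compares $L_{r,n}$ with $1-\exp(-\Delta_M/t(n)^2)$. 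Weyl's law gives spectral accumulation in $(0,\varepsilon)$ as $n\to\infty$. This survives on $L^2_H(M)$ because every eigenspace of $\Delta_M$ contains the nonzero $H$-invariant vector $p_\lambda(\cdot,x_0)$ built from its reproducing kernel.
\end{enumerate}
None of these ingredients appears in your plan. Your own use of freeness (``$H$ meets $\alpha(G)$ trivially'') is not where freeness is actually needed.

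The route you suggest instead---pulling back along $K\to K/H$ and transporting the Deng--Toyota witness from $K$---does not work as stated. Because $G$ acts on the left and $H$ on the right, the fibres $kH$ are not uniformly bounded in the warped metrics on $K$. Take a bi-invariant metric on $K$ and a chain of cost at most $R$ from $k$ to $kh$. It produces $g\in G$ with $\ell(g)\le R$ and $d(\alpha(g),khk^{-1})\le R/t$. By freeness $khk^{-1}\notin\alpha(G)$ for $h\neq e$, so $\delta_G^{t(n)}(k,kh)\to\infty$. Hence the two cones are not coarsely equivalent. The natural lift $T\mapsto VTV^*$, with $V\colon L^2(K/H)\to L^2(K)$ the pull-back isometry, sends $1$ to the fibre-averaging projection $VV^*$, whose support is not controlled over $K$. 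You give no other $*$-homomorphism that would carry witnesses in the required direction. The paper avoids the issue by working directly with kernels on $M\times M$. Note also that the Deng--Toyota mechanism is the heat-operator/Weyl-law comparison, not a Peter--Weyl decomposition. The one genuinely new point in the homogeneous case is that this spectral information is not lost on the $H$-invariant subspace.
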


Thus, geometric property (T) cannot arise from free isometric actions on compact Riemannian manifolds, even when the acting group has Kazhdan's property (T). In particular, Theorem~\ref{main} gives a strong negative answer to Winkel's question within the class of isometric actions.

On the other hand, we show that a non-isometric, ergodic,
probability-measure-preserving action on a compact Riemannian
manifold can give rise to a warped cone with geometric property (T),
answering \cite[Question~5.5]{DengToyotaGeometric}.

\begin{thm}\label{example}
   For $m\geq 3$, the warped cone $\bigsqcup_n (M,\delta_G^{n})$ associated to the action $SL_m(\mathbb{Z}) \curvearrowright \mathbb{R}^m/\mathbb{Z}^m$ has geometric property (T).
\end{thm}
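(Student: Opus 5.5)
Since the statement is Theorem~\ref{example}, the plan is to verify the spectral-gap characterization of geometric property (T) of Willett--Yu directly: the graph Laplacian $\Delta$ of the discretized warped cone must have a spectral gap in the maximal uniform Roe algebra. Equivalently, there is $c>0$ such that for every $*$-representation $\pi$ of the algebraic uniform Roe algebra $\mathbb{C}_u[\bigsqcup_n X_n]$ on a Hilbert space $H$,
\[
\sigma(\pi(\Delta))\subset\{0\}\cup[c,\infty),
\]
uniformly in $n$. I would first reduce to the generators of $SL_m(\mathbb{Z})$. Because $G$ acts by Lipschitz homeomorphisms, the warped metric $\delta^n_G$ is coarsely equivalent to the metric generated by cone-scale Riemannian steps together with the jumps $x\mapsto sx$ for $s$ in a finite generating set $S$. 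Hence $\Delta$ is spectrally comparable to $\Delta_{\mathrm{loc}}+\sum_{s\in S}(1-V_s)$. Here $V_s$ is the partial translation $\delta_x\mapsto\delta_{sx}$, a genuine unitary in the uniform Roe algebra, and $\Delta_{\mathrm{loc}}$ is the Laplacian of the discretized scaled torus $nM$.

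Next I would exploit the semidirect product $\mathbb{Z}^m\rtimes SL_m(\mathbb{Z})$, which has relative property (T) for $m\ge 2$ and property (T) outright for $m\ge 3$. The point is that at scale $n$, translations of $M$ by $v/n$ with $v\in\mathbb{Z}^m$ bounded move points a bounded warped distance. So they too define partial translations $T_v$ in the uniform Roe algebra, at least approximately after discretization, and we have $\Delta_{\mathrm{loc}}\approx\sum_{i}(1-T_{e_i})$. Moreover, $V_sT_vV_s^{-1}=T_{sv}$ exactly on the torus, since $s(x+v/n)=sx+sv/n$. In any representation $\pi$, the operators $\pi(V_s)$ and $\pi(T_v)$ therefore give an honest unitary representation of $\mathbb{Z}^m\rtimes SL_m(\mathbb{Z})$ on $H$, up to discretization errors that I would control by choosing the discretization $X_n$ to be the lattice $\frac1{n}\mathbb{Z}^m/\mathbb{Z}^m$. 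This lattice is $SL_m(\mathbb{Z})$-invariant and also invariant under the translations by $\frac1n\mathbb{Z}^m$, so the relations hold exactly. I would also check that this lattice is a legitimate discretization of the warped cone, i.e.\ coarsely dense and uniformly discrete for $\delta^n_G$.

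Property (T) of $\Gamma=\mathbb{Z}^m\rtimes SL_m(\mathbb{Z})$ with finite generating set $S\cup\{e_i\}$ gives a Kazhdan constant. It follows that $\sum_{s}(1-\pi(V_s))+\sum_i(1-\pi(T_{e_i}))$, viewed as the image of the group Laplacian, has spectrum in $\{0\}\cup[\kappa,\infty)$. The constant $\kappa$ depends only on $\Gamma$ and the generators, so it is independent of $\pi$ and of $n$. The kernel is the space of $\Gamma$-invariant vectors. The main obstacle is identifying this kernel with $\ker\pi(\Delta)$ and doing the transfer uniformly. I would show that the element $P$ of the uniform Roe algebra which projects, on each $X_n$, onto the constants (the $\Gamma$-invariant vectors in $\ell^2(X_n)$, since the action of $\Gamma$ on $X_n$ is transitive) must be checked to be compatible with $\pi$. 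In fact one only needs spectral comparison: since $\Delta$ and the group-Laplacian element $L$ lie in the algebraic uniform Roe algebra and satisfy $C^{-1}L\le\Delta\le CL$ as positive elements (sums of squares), each dominates the other up to constants. With these finite-propagation estimates written as identities of the form $(1-V_g)^*(1-V_g)\le |g|\sum_{s}(1-V_s)^*(1-V_s)$ in the $*$-algebra, a spectral gap for $\pi(L)$ transfers to $\pi(\Delta)$.

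A subtle point is that a gap transfer between comparable positive operators also needs the kernels to coincide. Here $\ker\pi(L)=\ker\pi(\Delta)$ holds because each is characterized by vanishing of all $1-\pi(V_g)$ for edge generators. This completes the plan.
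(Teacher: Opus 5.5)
Your proposal is correct and is essentially the paper's proof. The paper also discretizes by the lattice $\mathbb{Z}^m/n\mathbb{Z}^m$ (your $\frac1n\mathbb{Z}^m/\mathbb{Z}^m$) and lets $\mathbb{Z}^m\rtimes SL_m(\mathbb{Z})$ act on it by affine maps, so that $\pi\circ\lambda$ is an honest representation of the group; the uniform gap then comes from property (T) of the semidirect product for $m\geq 3$. The only difference is bookkeeping: the paper first replaces the warped cone by the quasi-isometric Schreier graphs and then transfers the gap via the two-sided sum-of-squares comparison $\pi(\Delta)\le\pi(\lambda(\Delta_H))\le|\widetilde S|\,\pi(\Delta)$ of Lemma~\ref{lem:multigraph-simple}, which is the comparison you sketch and which already forces the kernels to coincide.
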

This example reveals a sharp contrast between isometric and non-isometric settings for geometric property (T) of warped cones.

Finally, we show that, for free probability-measure-preserving
Lipschitz actions on compact Riemannian manifolds, geometric
property (T) of the warped cone forces the acting group to have
Kazhdan’s property (T).

\begin{thm}\label{if not kazhdan intro}
Let $G$ be a finitely generated group and $\alpha:G\curvearrowright M$ be a free and probability measure preserving Lipschitz action on a compact Riemannian manifold $M$. If $G$ does not have Kazhdan's property (T), then the warped cone $\bigsqcup (M,\delta_G^{t(n)})$ associated to $G\curvearrowright M$ does not have geometric property (T) for any sequence of scaling parameters $\{t(n)\}_n$ with $\lim t(n) \to \infty$.
\end{thm}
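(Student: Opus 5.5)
The plan is to use the characterization of geometric property (T) for a uniformly locally finite space $X=\bigsqcup_n X_n$ (Willett--Yu): $X$ has geometric property (T) iff the Laplacian $\Delta$ has a spectral gap in the maximal uniform Roe algebra $C^*_{u,\max}(X)$. Equivalently, there is $c>0$ such that for \emph{every} unitary representation $\pi$ of the algebraic uniform Roe algebra $\mathbb C_u[X]$ we have $\operatorname{spec}(\pi(\Delta))\subseteq\{0\}\cup[c,\infty)$. To disprove the property it therefore suffices to build, for each $\varepsilon>0$, a representation $\pi$ of $\mathbb C_u[X]$ and a unit vector $\xi$ that is orthogonal to $\ker\pi(\Delta)$ yet satisfies $\langle\pi(\Delta)\xi,\xi\rangle<\varepsilon$. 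Since $G$ is assumed not to have (T), I will transfer almost-invariant vectors of a representation of $G$ without invariant vectors into such a representation of the warped cone.

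\textbf{Step 1: the representation.} Fix a unitary representation $\sigma\colon G\to U(\mathcal H)$ with almost invariant vectors but no nonzero invariant vectors. Since $G$ has no (T), we may moreover assume $\sigma$ is weakly mixing, i.e.\ has no finite-dimensional subrepresentation. This can be arranged by tensoring with a suitable mixing representation; alternatively, the Bekka--Valette characterization gives a weakly mixing representation with almost invariant vectors. Consider $L^2(M,\mu)\otimes\mathcal H$ and the product action $g\mapsto \alpha_g\otimes\sigma(g)$, which is unitary because the action is measure preserving. Warped-cone generators are ``move by $g$ at point $x$'', meaning the partial translation $x\mapsto gx$ on the discretized level set $X_{t(n)}$. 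I will represent $\mathbb C_u[X]$ on an ultraproduct-type or limit space built from $\ell^2(X_{t(n)})\otimes\mathcal H$, with each such partial translation acting by $\sigma(g)$ on the fibre. Multiplication operators by bounded functions act diagonally. Freeness of the action is what makes this consistent: at large scale, for fixed $R$, the warped metric ball of radius $R$ in $X_{t(n)}$ is eventually modelled on the ball in $G$ (for Lipschitz free actions, distinct words of bounded length move points by at least a definite amount in $M$ after scaling, up to a null/small set handled by measure estimates). Hence a finite-propagation operator decomposes uniquely into a sum of translation-by-$g$ pieces, and the assignment extends to a $*$-homomorphism. Concretely, I would define $\pi(T)$ using the decomposition $T=\sum_{|g|\le R} f_g V_g$ into multiplications times partial translations, valid for large $n$, and take the limit along a free ultrafilter in $n$ so that small exceptional sets disappear.

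\textbf{Step 2: small Laplacian and orthogonality.} Let $\eta_k\in\mathcal H$ be $(S,1/k)$-almost invariant unit vectors and put $\xi_k=\mathbf 1\otimes\eta_k$, normalized via $\mu$. Then $\langle\pi(\Delta)\xi_k,\xi_k\rangle\approx\sum_{s\in S}\|\sigma(s)\eta_k-\eta_k\|^2\to0$, using that in the warped cone the edges are generated by $G$-translations together with short moves in $M$; the latter cost is controlled because $\xi_k$ is constant in the $M$-direction. For orthogonality, a vector in $\ker\pi(\Delta)$ is invariant under all partial translations, hence is an $(\alpha\otimes\sigma)(G)$-invariant vector of $L^2(M)\otimes\mathcal H$ that is also invariant under local ``$M$-moves'', i.e.\ constant in $M$ (modulo the limit construction). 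Invariance then forces an invariant vector in $\mathcal H$, which does not exist. More carefully, $G$-invariant vectors of $L^2(M)\otimes\mathcal H\cong HS(\overline{L^2(M)},\mathcal H)$ correspond to finite-dimensional subrepresentations shared with $\overline{\alpha}$, and these vanish by weak mixing. So $\ker\pi(\Delta)=0$, and the $\xi_k$ show that $0$ is in the spectrum without being isolated.

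\textbf{Main obstacle.} The delicate step is Step~1: proving that the map is a well-defined $*$-homomorphism on all of $\mathbb C_u[X]$, not merely on the group-generated part. This involves the coarse identification of the warped metric with the ``$G$-orbit plus $M$-scaled'' metric, which follows from Roe's and Sawicki's straightening results for free Lipschitz actions, together with the control of the exceptional sets where two group elements act nearly identically. I would handle this with the measure-preserving hypothesis, using the fact that $\mu(\{x: d(gx,hx)<r/t\})\to0$ for $g\ne h$ by freeness and compactness, so these sets contribute nothing in the ultralimit, and then verify multiplicativity on the dense subalgebra of finite propagation.
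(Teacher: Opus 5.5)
Your architecture matches the paper's. Freeness gives $\ControlledSupport[X]/I\cong(\ControlledSupport[Y]/I)\rtimes_{\alg}G$, which is Winkel's Lemma~11.8; the paper cites it rather than reproving it. The coefficient algebra acts on an asymptotic space containing the normalized constants $\xi_n$, $G$ acts by $\lambda(g)\otimes\sigma(g)$, and one tests on $\mathbf 1\otimes\eta_k$. Your Step~2 estimate is also right, for the paper's reason: writing $q(\Delta)=\sum_{g\in F}A_g g$, one has $\sum_g A_g\xi_n=\Delta\xi_n=0$ for large $n$, so only the factors $\sigma(g)\eta_k-\eta_k$ survive.

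The gap is the orthogonality step. The claim $\ker\pi(\Delta)=0$ is false in your model, and weak mixing does not rescue it. Vectors fixed by the ultraproduct of $\lambda_n\otimes\sigma$ are asymptotically invariant sequences, not invariant vectors of $L^2(M)\otimes\mathcal H$. Moreover, $\lambda\otimes\sigma$ does have almost invariant vectors, namely your own $\mathbf 1\otimes\eta_k$. Concretely, suppose $\|\sigma(s)\eta_k-\eta_k\|<1/k$ for $s\in S$ and $k(n)\to\infty$. Then the unit vector $[(\xi_n\otimes\eta_{k(n)})_n]$ is fixed by every $g\in G$, and the Step~2 computation shows it is annihilated by $\pi(\Delta)$. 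So the argument cannot go through ``trivial kernel''. Your phrase ``(modulo the limit construction)'' hides exactly the point where it breaks.

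What is true, and suffices, is that every $\mathbf 1\otimes\eta$ is orthogonal to $\ker\pi(\Delta)$; this is Lemma~\ref{tensor}(ii). Kernel vectors are fixed by the image of each $g$, by Winkel's kernel characterization, since $\Phi(\lambda(g))=1$. The closed subspace $\mathbf 1\otimes\mathcal H$ (in the paper, $\Omega\otimes\mathcal H'$) is reducing for the $G$-action, and $G$ acts on it by $\sigma$ because the constant vector is $G$-fixed. Hence the projection of a kernel vector onto this subspace is $\sigma$-invariant, and therefore $0$. This uses only the absence of invariant vectors, so the weak-mixing detour is unnecessary.

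Two smaller points in Step~1:
\begin{enumerate}[i)]
\item The coefficients in $T=\sum_g A_gV_g$ must be local operators in $\ControlledSupport[Y]$, not multiplication operators, because the warped cone also contains the short $M$-moves.
\item There are no exceptional sets to discard. By freeness and compactness, $\min_x d_M(gx,hx)>0$ for $g\neq h$, so for fixed $R$ the pieces separate for all large $n$. This matters because an ultralimit does not make pieces supported on sets of small measure vanish in operator norm. A measure estimate alone would therefore not give a well-defined $*$-homomorphism.
\end{enumerate}
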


This gives an analogue for warped cones of the ``only if'' direction of \cite[Theorem~7.3]{Willett2014Geometric} about geometric property (T) for box spaces and answers the question \cite[Question~11.6]{Jeroen2021Geometric} in the probability measure preserving case.

\section{Preliminaries on warped cones and geometric property (T)}

We first fix some notation from coarse geometry that will be used throughout
the paper. Let $(X,d)$ be a metric space, where we allow the metric to take
the value $\infty$. A subset $E\subseteq X\times X$ is called
\emph{controlled} if 
\[
    \sup_{(x,y)\in E} d(x,y)<\infty.
\]
For subsets $E,F\subseteq X\times X$, we write
\begin{align*}
    E^{-1}&:=\{(x,y)\in X\times X:(y,x)\in E\},\\
    E\circ F
    &:=\{(x,z)\in X\times X:\exists y\in X \text{ such that } (x,y)\in E,\ (y,z)\in F
    \},
  \end{align*}
and denote by $E^{\circ n}$ the $n$-fold composition of $E$.
For $x\in X$, we also write
\[
    E_x:=\{y\in X:(x,y)\in E\}.
\]
A controlled set $E$ is called \emph{coarse generating} if every controlled
set $F\subseteq X\times X$ is contained in $E^{\circ n}$ for some
$n\in\mathbb N$.

If $(X,d,\mu)$ is a metric measure space and
$T\in B(L^2(X,\mu))$, we define
\[
\operatorname{supp}(T)
:=
\left\{
(x,y)\in X\times X:
\chi_VT\chi_U\neq0
\text{ for every open }U\ni x,\ V\ni y
\right\}.
\]
We denote by $\ControlledSupport[X]$ the algebra of operators
$T\in B(L^2(X,\mu))$ whose support is controlled. When necessary, we write
$\ControlledSupport[(X,d)]$ to specify the metric.
For the bounded geometry spaces considered below,
$\ControlledSupport[X]$ is a $*$-algebra
\cite[Proposition~3.7]{Jeroen2021Geometric}.

\subsection{Warped cones}

In this subsection, we recall the notion of warped cones. Since we are interested in expanders and geometric property $T$, we use the discretized version as in \cite[Section~6]{Federico2019Measure}.

\begin{dfn}\label{warped system}\leavevmode
\rm{
\begin{enumerate}[(i)]
    \item Let $(X,d)$ be a proper metric space and let $G$ be a finitely generated group with a finite symmetric generating set $S\subseteq G$ acting on $X$ by homeomorphisms. We denote by $\ell$ the word-length function associated to $S$. The warped distance $\delta_{G}(x,y)$ between two points $x,y\in X$ is defined to be
    \begin{align*}
       \displaystyle \delta_{G}(x,y):=\inf \sum \left(d(g_ix_i,x_{i+1})+\ell(g_i)\right),
    \end{align*}
    where the infimum is taken over all finite sequences $x=x_0,x_1,\cdots,x_N=y$ in $X$ and $g_0,g_1,\cdots, g_{N-1}$ in $G$.

    \item Let $(M,d)$ be a compact metric space and let $\{t(n)\}_n$ be an increasing sequence of positive numbers such that $\lim_{n\to \infty} t(n)=\infty$. For each $n$, let $d^{t(n)}$ be the rescaled metric $ d^{t(n)}(x,y):=t(n)d(x,y)$. Suppose that a finitely generated group $G$ acts on $M$. We denote by
    $\delta_G^{t(n)}$ the warped metric associated to $d^{t(n)}$ and the $G$-action. The associated warped cone is the disjoint union $\bigsqcup_n (M,\delta_G^{t(n)})$. Throughout the paper, different components of this disjoint union are taken to be at infinite distance from each other.
\end{enumerate}
    }
\end{dfn}

Roe proved \cite[Proposition~1.10]{Roe2005Warped} that the warped cone associated to a Lipschitz action on a compact Riemannian manifold has bounded geometry; in particular, it admits a discretization by graphs of uniformly bounded degree.

We shall use the following coarse generating set.
\begin{lem}[Lemma 11.7, \cite{Jeroen2021Geometric}]\label{Coarse Generating}
\rm{
    Let $M$ be a compact Riemannian manifold with Riemannian distance $d_M$, and let $G$ be a finitely generated group with a finite symmetric generating set $S\subseteq G$ containing the identity. If $G$ acts on $M$ by Lipschitz homeomorphisms, then for every $r>0$, the symmetric controlled set
    \begin{align*}
        E_r=\bigsqcup_n \{(x,y)&\in (M,\delta^{t(n)}_G)\times (M,\delta^{t(n)}_G):\\
        &\exists~ x'\in M, s\in S \text{ s.t. } t(n)d_M(x,x')<r/2 \text{ and } t(n)d_M(sx',y)<r/2\}
    \end{align*}
    is a coarse generating set for the warped cone $\bigsqcup(M,\delta^{t(n)}_G)$.
    }
\end{lem}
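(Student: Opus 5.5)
The plan is to check three things directly from the definition of $\delta^{t(n)}_G$ in Definition~\ref{warped system}: that $E_r$ is symmetric, that it is controlled, and that it is coarse generating. Since $S$ contains the identity, $E_r$ contains the diagonal. Hence $E_r^{\circ k}\subseteq E_r^{\circ K}$ whenever $k\le K$. This lets us pad compositions, so it suffices to show that every pair at bounded warped distance is joined by a uniformly bounded number of $E_r$-steps.

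\emph{Symmetry.} Let $(x,y)\in E_r$ be witnessed by $x'$ and $s$. Put $y':=sx'$ and use the generator $s^{-1}\in S$, which is available because $S$ is symmetric. Then $t(n)d_M(y,y')<r/2$ and $s^{-1}y'=x'$, so $t(n)d_M(s^{-1}y',x)<r/2$. This shows $(y,x)\in E_r$.

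\emph{Controlledness.} Use the chain $x,\ x',\ y$ with group elements $e$ and $s$ in the infimum defining $\delta^{t(n)}_G$. This gives
\[
\delta^{t(n)}_G(x,y)\le t(n)d_M(x,x')+t(n)d_M(sx',y)+\ell(s)< r+1,
\]
and the bound is uniform in $n$.

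\emph{Coarse generation.} Let $F$ be controlled, say $\delta^{t(n)}_G(x,y)\le R$ for all $(x,y)\in F$. Fix such a pair in the $n$-th component; we may assume $x,y$ lie in the same component, otherwise the distance is infinite.
\begin{enumerate}[(1)]
\item Choose a chain $x=x_0,\dots,x_N=y$ with $g_0,\dots,g_{N-1}$ whose total cost is $<R+1$.
\item Normalize the chain. Whenever $g_i=e$, delete $x_{i+1}$ and merge the two adjacent distance terms using the triangle inequality; this does not increase the cost. After this, every $g_i$ is nontrivial, so $\ell(g_i)\ge1$. Since the costs sum to less than $R+1$, we get $N\le R+2$.
\item Write each $g_i=s_1\cdots s_{k_i}$ as a word in $S$ with $k_i=\ell(g_i)\le R+1$. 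For any $z\in M$, the pair $(z,s z)$ lies in $E_r$, witnessed by $x'=z$. So the passage from $x_i$ to $g_ix_i$ takes at most $R+1$ steps of $E_r$.
\item Handle each distance term $d_M(g_ix_i,x_{i+1})$, whose rescaled value is $<R+1$. Here we use that $M$ is a compact Riemannian manifold. Each of its components is complete, so by Hopf--Rinow there is a minimizing geodesic between $g_ix_i$ and $x_{i+1}$. Subdivide it into at most $\lceil 2(R+1)/r\rceil+1$ pieces, each of rescaled length $<r/2$. Each piece is an $E_r$-step with $s=e$.
\end{enumerate}
Adding these counts, $(x,y)\in E_r^{\circ K}$ with
\[
K=(R+2)\bigl(R+1+\lceil 2(R+1)/r\rceil+1\bigr),
\]
which is independent of $n$. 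Hence $F\subseteq E_r^{\circ K}$.

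The only delicate point is step (2): an arbitrary near-optimal chain may have unboundedly many terms with tiny distances and identity group elements. The merging argument, combined with the fact that the lengths of the nontrivial $g_i$ are each at least $1$, is what gives a uniform bound on the number of steps. The Lipschitz hypothesis is not needed for this argument; it is only used elsewhere, for example for bounded geometry.
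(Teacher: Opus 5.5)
The paper gives no proof of this lemma; it is quoted from Winkel. A direct check from the definition of $\delta^{t(n)}_G$ is therefore the right approach. Your symmetry and controlledness arguments, steps (1), (3), (4) and the final count are fine. There is, however, a concrete error in step (2).

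If $g_i=e$ and you delete $x_{i+1}$, the terms adjacent to $x_{i+1}$ are $t(n)d_M(x_i,x_{i+1})$ and $t(n)d_M(g_{i+1}x_{i+1},x_{i+2})$. These are not two consecutive metric moves, because the jump by $g_{i+1}$ sits between them. The merged term would have to be $t(n)d_M(g_{i+1}x_i,x_{i+2})$. The triangle inequality only bounds it by $t(n)d_M(g_{i+1}x_i,g_{i+1}x_{i+1})+t(n)d_M(g_{i+1}x_{i+1},x_{i+2})$. The first summand equals $t(n)d_M(x_i,x_{i+1})$ for isometric actions, so your version works in the setting of Section 3, but it can be larger for a general Lipschitz action. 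So ``this does not increase the cost'' is false in the generality of the statement. It also contradicts your closing remark that the Lipschitz hypothesis is not needed.

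The fix is to delete the other point. When $g_i=e$ with $i\ge 1$, remove $x_i$ and replace $t(n)d_M(g_{i-1}x_{i-1},x_i)+t(n)d_M(x_i,x_{i+1})$ by $t(n)d_M(g_{i-1}x_{i-1},x_{i+1})$. This uses only the triangle inequality, with no group element moved, so the cost genuinely does not increase.

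This cannot remove a leading $g_0=e$, since $x_0=x$ is fixed. The correct conclusion is therefore that $g_1,\dots,g_{N-1}$ are nontrivial, which gives $N-1<R+1$. Hence $N<R+2$ still holds and your value of $K$ is unaffected. With this reindexing the argument is complete and really does not use the Lipschitz hypothesis.

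Your original merging can also be rescued with Lipschitz constants. Collapsing a run of identities in front of $g_{i+1}$ multiplies the collapsed walking distance by at most $\mathrm{Lip}(g_{i+1})\le L^{R+1}$, where $L:=\max_{s\in S}\mathrm{Lip}(s)$, and this bound is uniform in $n$. The reindexed version is cleaner.
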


\subsection{Laplacians}

In this subsection, we recall the coarse Laplacian $\Delta_{E_r}$, the group Laplacian $\Delta_G$, and the local Laplacian $L_r$ as in
\cite[Section~3]{DengToyotaGeometric}. Let $S\subseteq G$ be the finite symmetric generating set containing the identity. We assume that $S$ is enough large so that $S=S'^3$ for another generating set $S'$ of $G$ (cf. \cite[Proposition~7.9]{Jeroen2021Geometric}). Let $\alpha:G\curvearrowright M$ be a free isometric action on a compact $m$-dimensional Riemannian manifold $M$, and let $\mu$ be the Riemannian measure on $M$. In particular, the action preserves $\mu$.

We denote the warped cone corresponding to the sequence  $\{t(n)\}_n$ by $X=\bigsqcup (X_{t(n)},\delta_G^{t(n)})$. Since the action is isometric, the controlled set $E_r$ introduced in
Lemma~\ref{Coarse Generating} can be written as
\begin{align}
E_r=
\bigsqcup_n\{(x,y)\in X_{t(n)}\times X_{t(n)}:\exists s\in S \text{ s.t. } t(n)d_M(sx,y)<r\}.
\end{align}

We define the coarse Laplacian, the local Laplacian and the group Laplacian
\begin{align*}
    \Delta_{E_r}=\bigoplus_n  \Delta_{E_r,n}&:\bigoplus L^2(X_{t(n)},\mu_{t(n)})\to \bigoplus L^2(X_{t(n)},\mu_{t(n)})\\
   L_r=\bigoplus_n L_{r,n}&:\bigoplus L^2(X_{t(n)},\mu_{t(n)})\rightarrow \bigoplus L^2(X_{t(n)},\mu_{t(n)}),\\
   \Delta_G=\bigoplus_n \Delta_{G,n}&:\bigoplus L^2(X_{t(n)},\mu_{t(n)})\rightarrow \bigoplus L^2(X_{t(n)},\mu_{t(n)})
\end{align*}
by
\begin{align*}
\begin{split}
(\Delta_{E_r,n}\xi)(x)&=\int_{(E_r)_x} (\xi(x)-\xi(y) )d\mu_{t(n)}(y)\\
(L_{r,n}\xi)(x)&=\int_{B_{\frac{r}{t(n)}}(x;d_M)}(\xi(x)-\xi(y))d\mu_{t(n)}(y)\\
(\Delta_{G,n}\xi)(x)&=\sum_{s\in S}(\xi(x)-\xi(sx)).
\end{split}
\end{align*}

By \cite[Proposition~7.9, Lemma~11.7]{Jeroen2021Geometric}, geometric property (T) for warped cones can be characterized in terms of the spectral gap of the coarse Laplacian.
\begin{prop}
    The warped cone $X=\bigsqcup (X_{t(n)},\delta_G^{t(n)})$ has geometric property (T) if and only if there exists $\delta>0$ such that for every unital $*$-homomorphism $\rho:\ControlledSupport[X]\rightarrow B(\HH)$, we have  
    \begin{align*}
        \sigma(\rho(\Delta_{E_r}))\subset \{0\}\cup [\delta,\infty).
    \end{align*}
\end{prop}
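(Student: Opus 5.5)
The plan is to reduce the statement to Winkel's general framework for geometric property (T) of bounded geometry metric measure spaces and then to specialize it to the coarse generating set $E_r$ of Lemma~\ref{Coarse Generating}. Recall that in \cite{Jeroen2021Geometric} a bounded geometry metric measure space $X$ is defined to have geometric property (T) if, for some (equivalently every) symmetric coarse generating set $E$, there is $\delta>0$ with $\sigma(\rho(\Delta_E))\subset\{0\}\cup[\delta,\infty)$ for every unital $*$-representation $\rho$ of $\ControlledSupport[X]$. In that definition $\Delta_E\xi(x)=\int_{E_x}(\xi(x)-\xi(y))\,d\mu(y)$. So the proof has three steps. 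First, the warped cone is an admissible space for this framework. Second, $E_r$ is a legitimate coarse generating set and the paper's $\Delta_{E_r}$ is Winkel's Laplacian for it. Third, the spectral gap condition does not depend on the chosen coarse generating set.

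\textbf{Admissibility.} I would invoke Roe's \cite[Proposition~1.10]{Roe2005Warped}. Since the action is Lipschitz (indeed isometric), each $(M,\delta_G^{t(n)},\mu_{t(n)})$ has bounded geometry uniformly in $n$. Here $\mu_{t(n)}$ is the Riemannian measure rescaled so that balls of radius $r$ in $t(n)d_M$ have measure bounded above and below uniformly. This also makes $\ControlledSupport[X]$ a $*$-algebra \cite[Proposition~3.7]{Jeroen2021Geometric}. Since components are at infinite distance, $\Delta_{E_r}$ decomposes as the direct sum $\bigoplus_n\Delta_{E_r,n}$. It is a bounded positive element of $\ControlledSupport[X]$, because $\mu_{t(n)}((E_r)_x)$ is uniformly bounded.

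\textbf{The generating set $E_r$.} Lemma~\ref{Coarse Generating} shows that $E_r$ is a symmetric coarse generating set. Because the action is isometric, $t(n)d_M(x,x')<r/2$ implies $t(n)d_M(sx,sx')<r/2$. Hence $E_r$ takes the simplified form displayed in the paper, up to replacing the radius $r$ by a comparable one, which is harmless by the independence step below.

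\textbf{Independence of the generating set (main step, \cite[Proposition~7.9]{Jeroen2021Geometric}).} Let $E,F$ be symmetric coarse generating sets with $E\subseteq F^{\circ k}$. I would first show an operator inequality $\Delta_E\le C\,\Delta_F$ in $\ControlledSupport[X]$, in the sense that $C\Delta_F-\Delta_E=\sum_i a_i^*a_i$ for finitely many controlled-support operators $a_i$. The idea is to write $\xi(x)-\xi(y)$ for $(x,y)\in E$ as a telescoping sum along a chain of $k$ steps in $F$. This must be done measurably and with uniformly controlled multiplicity, which is possible by bounded geometry: one averages over chains using the measure, i.e.\ a path-counting argument. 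Then Cauchy--Schwarz gives $\langle\Delta_E\xi,\xi\rangle\le C\langle\Delta_F\xi,\xi\rangle$ at the algebraic level, so the inequality survives every representation $\rho$. Since $E\supseteq$ the diagonal neighbourhood generates as well, there is a symmetric reverse inequality, and so $\ker\rho(\Delta_E)=\ker\rho(\Delta_F)$. Combining the two inequalities with the equality of kernels, a gap $\delta$ for $\rho(\Delta_F)$ gives a gap $\delta/C'$ for $\rho(\Delta_E)$, uniformly in $\rho$.

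I expect this step to be the obstacle. The delicate point is turning the combinatorial chain decomposition into genuine sums of squares of controlled-support operators on $L^2$ of a continuous measure space, rather than of a discrete graph. If this proves awkward, I would discretize: pass to a uniformly bounded-degree net, where Winkel's graph argument applies directly, and compare via a coarse equivalence.

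Applying the independence step with $E=E_r$ then yields the stated equivalence.
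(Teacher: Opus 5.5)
Your reduction matches the paper's. Its entire proof is the citation of \cite[Proposition~7.9, Lemma~11.7]{Jeroen2021Geometric}, so once you cite these your sum-of-squares sketch of independence is not needed. In the non-discrete setting, the algebraic step of that sketch is exactly the part you leave open.

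The one thing to correct is your claim that the gap condition is independent of \emph{every} symmetric coarse generating set. In the measure setting this is false, because a coarse generating set can have null fibres. On $\mathbb{R}$, let $C$ be the middle-thirds Cantor set and $F=\{(x,y):x-y\in C\cup(-C)\}$. Since $C-C=[-1,1]$, we get $F\circ F\supseteq\{(x,y):|x-y|\le 1\}$, so $F$ is coarse generating. Yet $\Delta_F=0$, so it has a vacuous "spectral gap". Your chain-averaging step tacitly needs $\mu(F_x)$ bounded below.

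So rather than appeal to a blanket independence, you must check that $E_r$ satisfies the hypothesis of Proposition~7.9. The paper does this by fixing $S=S'^3$ in the setup, with a pointer to Proposition~7.9. For an isometric action, this makes $E_r$ the threefold composite of the analogous set of radius $r/3$ built from $S'$, which you can check by subdividing a minimizing geodesic into thirds.
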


For each $n$, let us define a function on $M$ by
$$\phi_n(x):=t(n)^m\cdot\mu(B_{\frac{r}{t(n)}}(x;d_M))$$ 
for any $x\in M$.  
Define a function $\phi \in L^{\infty}(\bigsqcup (X_{t(n)},\mu_{t(n)}))$ by $\phi(x):=\phi_n(x)$ for any $x\in X_{t(n)}$. Then we have the following.

\begin{lem}[\cite{DengToyotaGeometric} Lemma~3.1]\label{local and group laplacians}
With these notations, we have 
\begin{enumerate}
 \item[(1)] $\Delta_{E_r}=|S|\phi-(|S|-\Delta_G)(\phi-L_{r})$ for sufficiently small $r>0$;
 \item[(2)] the function $\phi$ and the local Laplacian $L_r$ are $G$-equivariant.
\end{enumerate}
\end{lem}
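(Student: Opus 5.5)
The plan is to compute both sides of (1) directly on a function $\xi\in L^2(X_{t(n)},\mu_{t(n)})$, working in one component at a time. The key input is the description of $E_r$ for isometric actions: $(E_r)_x=\bigcup_{s\in S}B_{r/t(n)}(sx;d_M)$. Before computing, we must fix the normalization of $\mu_{t(n)}$. It should be the rescaled Riemannian measure $t(n)^m\mu$, so that $\mu_{t(n)}(B_{r/t(n)}(x;d_M))=\phi_n(x)$; the notation $\phi_n=t(n)^m\mu(B_{r/t(n)}(x))$ is consistent with this choice.

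\textbf{Disjointness for small $r$.} Freeness plus compactness should make the balls $B_{r/t(n)}(sx)$, $s\in S$, pairwise disjoint for $r$ small, uniformly in $n$ and $x$. Since $S$ is finite and the action is free, the continuous function $x\mapsto\min_{s\neq s'\in S} d_M(sx,s'x)$ is positive on the compact $M$, hence bounded below by some $c>0$. Because $t(n)$ is increasing, we have $t(n)\geq t(1)$. So if $r<c\,t(1)/2$, then $2r/t(n)<c$ for all $n$, and the balls are disjoint. This is the main point where ``sufficiently small $r$'' and freeness enter; I expect it to be the only nontrivial step.

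\textbf{Computing $\Delta_{E_r}\xi$.} Using disjointness,
\[
(\Delta_{E_r,n}\xi)(x)=\sum_{s\in S}\int_{B_{r/t(n)}(sx)}(\xi(x)-\xi(y))\,d\mu_{t(n)}(y).
\]
Since $\alpha$ is isometric and $\mu$-preserving, we have $B_{r/t(n)}(sx)=s\,B_{r/t(n)}(x)$ and $\phi_n(sx)=\phi_n(x)$. This gives
\[
(\Delta_{E_r,n}\xi)(x)=|S|\phi_n(x)\xi(x)-\sum_{s\in S}\int_{B_{r/t(n)}(sx)}\xi(y)\,d\mu_{t(n)}(y).
\]

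\textbf{Computing the right-hand side.} Writing $(\phi-L_r)\xi(x)=\int_{B_{r/t(n)}(x)}\xi\,d\mu_{t(n)}=:A\xi(x)$, we have
\[
(|S|-\Delta_G)\eta(x)=\sum_{s\in S}\eta(sx).
\]
Hence
\[
\bigl((|S|-\Delta_G)A\xi\bigr)(x)=\sum_{s}\int_{B_{r/t(n)}(sx)}\xi\,d\mu_{t(n)}.
\]
Comparing with the previous display proves (1).

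\textbf{Part (2).} $\phi(sx)=\phi(x)$ follows from $\mu$-invariance together with $sB_\rho(x)=B_\rho(sx)$. For $L_r$, a change of variables $y\mapsto sy$, using invariance of $\mu_{t(n)}$, shows that $L_r$ commutes with the unitary $\xi\mapsto\xi\circ s$.
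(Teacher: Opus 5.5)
Your proof is correct. The paper itself does not reprove this lemma but imports it from \cite{DengToyotaGeometric}, and your direct computation is the natural argument behind it. Its key point, as you say, is that freeness and compactness make the balls $B_{r/t(n)}(sx;d_M)$, $s\in S$, pairwise disjoint for small $r$ uniformly in $n$; this gives $\mu_{t(n)}((E_r)_x)=\sum_{s}\phi_n(sx)=|S|\phi_n(x)$ and $\int_{(E_r)_x}\xi=\sum_s\int_{B_{r/t(n)}(sx)}\xi$. You also correctly use the normalization $\mu_{t(n)}=t(n)^m\mu$ and the invariance $\phi_n(sx)=\phi_n(x)$, which is needed because $\phi$ need not be constant for a general free isometric action.
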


\section{Proof of the Theorem~\ref{main}}

We continue to assume that $\alpha:G\curvearrowright M$ is free and isometric, and we use the notation introduced in the previous section. We may further assume that the action is ergodic. Indeed, otherwise the associated warped cone is not an expander by \cite{Federico2019Measure}, and hence cannot have
geometric property (T). We denote by $X=\bigsqcup(X_{t(n)},\delta^{t(n)}_G)$ and $Y=\bigsqcup(X_{t(n)},d^{t(n)})$, the coarse disjoint unions of $M$ with warped or non-warped distance, respectively (Definition~\ref{warped system}).
By \cite[Lemma~11.8]{Jeroen2021Geometric}, there is a 
$*$-homomorphism from the algebraic crossed product
   \begin{align*}
    \Psi:\ControlledSupport[Y]\rtimes_{\alg}G \rightarrow \ControlledSupport[X],
   \end{align*}
   which fits into the commutative diagram
\begin{center}
\begin{tikzcd}
\ControlledSupport[Y]\rtimes_{\alg}G \arrow[r, "\Psi"] \arrow[d, "q\otimes 1"]
& \ControlledSupport[X] \arrow[d, "q" ] \\
\left(\ControlledSupport[Y]/I\right)\rtimes_{\alg}G \arrow[r,"\cong"]
&\ControlledSupport[X]/I,
\end{tikzcd}
\end{center}
Here
\[
I:=\bigoplus_n B(L^2(M,\mu_{t(n)}))
\]
is the direct sum. The right vertical map is the quotient map by $\overline I$, while the left vertical map is induced by the $G$-equivariant quotient. The bottom horizontal map is an isomorphism, if the action $G\curvearrowright M$ is free. (This is true for all free Lipschitz actions on a compact Riemannian manifold without assuming that the action is isometric.)
Let 
$$\widetilde{\Delta}_{E_r}:=|S|\phi-(\phi-L_r)\otimes(|S|-\Delta_G)\in \overline{\ControlledSupport[Y]}\rtimes G.$$
Since $\Psi(\widetilde{\Delta}_{E_r})=\Delta_{E_r}$, to prove the main theorem, it suffices to show that $(q\otimes 1)(\widetilde{\Delta}_{E_r})$ does not have a spectral gap in $$\left(\overline{\ControlledSupport[Y]}^{L^2}/\overline{I}\right)\rtimes_{\max}G\cong\left(\overline{\ControlledSupport[Y]}^{L^2}\rtimes_{\max}G\right)/(\overline{I}\rtimes_{\max}G).$$
Here $\overline{\ControlledSupport[Y]}^{L^2}$ is the norm completion of $\ControlledSupport[Y]$ in $B(L^2(Y))$.

We shall construct a covariant system $(\pi,U,\HH)$ for the
$C^*$-dynamical system
\[
\ad:G\curvearrowright\overline{\ControlledSupport[Y]}^{L^2}
\]
which detects the failure of a spectral gap for
$\widetilde{\Delta}_{E_r}$. For such a covariant system, we denote by
$\HH_G$ the $G$-invariant subspace of $\HH$. 
\begin{prop}\label{sufficient}
To prove Theorem~\ref{main}, it suffices to construct a covariant system
$(\pi,U,\HH)$ for $\ad:G\curvearrowright\overline{\ControlledSupport[Y]}^{L^2}$
satisfying the following conditions:
    \begin{enumerate}[i)]
        \item $(\pi,U,\HH)$ is a direct sum of a sequence of covariant systems $(\pi^{(n)},U^{(n)},\HH^{(n)})_n$ such that $\HH^{(n)}$ is the support of $B(L^2(X_{t(n)},\mu_{t(n)}))\subseteq \overline{\ControlledSupport[Y]}^{L^2}$, i.e. $\HH^{(n)}:=\pi(1 |_{L^2(X_{t(n)},\mu_{t(n)})})\HH$.

        \item $\pi(L_r)|_{\HH_G}$ does not have a spectral gap in the quotient $B(\HH_G)/\oplus_n B(\HH^{(n)}_G)$.
    \end{enumerate}
\end{prop}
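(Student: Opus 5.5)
The plan is to show that a covariant system with properties i) and ii) produces a unital $*$-representation of $\ControlledSupport[X]$ in which $\rho(\Delta_{E_r})$ has spectrum accumulating at $0$ from above. By the Proposition characterizing geometric property (T) through the spectral gap of $\Delta_{E_r}$, this proves Theorem~\ref{main}. The route is
\[
\ControlledSupport[X]\xrightarrow{q}\ControlledSupport[X]/I\cong(\ControlledSupport[Y]/I)\rtimes_{\alg}G\to B\Big(\bigoplus_n\HH^{(n)}\Big/\text{(compact-type part)}\Big).
\]
Concretely, the covariant system $(\pi,U,\HH)$ integrates to a representation $\pi\rtimes U$ of $\overline{\ControlledSupport[Y]}^{L^2}\rtimes_{\max}G$. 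Condition i) says that $\pi(I)=\bigoplus_n\pi(B(L^2(X_{t(n)})))$ acts blockwise on $\HH=\bigoplus_n\HH^{(n)}$. Since the $U$-action commutes with $\pi(1|_{L^2(X_{t(n)})})$ (that projection is $\mathrm{ad}$-invariant), each $\HH^{(n)}$ is $G$-invariant.

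First, I will compress to invariant vectors. Let $P$ be the projection onto $\HH_G=\bigoplus_n\HH^{(n)}_G$. Because $\phi$ and $L_r$ are $G$-equivariant (Lemma~\ref{local and group laplacians}(2)), $\pi(\phi)$ and $\pi(L_r)$ commute with $U$ and hence preserve $\HH_G$. On $\HH_G$ the element $|S|-\Delta_G$ acts as $\sum_{s}U_s=|S|$. Therefore
\[
(\pi\rtimes U)(\widetilde{\Delta}_{E_r})\big|_{\HH_G}=|S|\pi(\phi)-|S|\big(\pi(\phi)-\pi(L_r)\big)=|S|\,\pi(L_r)\big|_{\HH_G},
\]
and $\HH_G$ reduces $(\pi\rtimes U)(\widetilde\Delta_{E_r})$.

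Next, I will pass to the quotient. Since $\pi\rtimes U$ maps $\overline I\rtimes_{\max}G$ into $\bigoplus_n B(\HH^{(n)})$, compressing by $P$ sends it into $\bigoplus_n B(\HH^{(n)}_G)$. I have to check that $P(\pi\rtimes U)(\cdot)P$ is multiplicative on the relevant elements. It is, because $P$ commutes with $U_g$ and with $\pi$ of $G$-invariant elements, and in any case $\HH_G$ reduces the self-adjoint element we care about. This gives a $*$-homomorphism from $C^*(\widetilde\Delta_{E_r})$, and from the relevant quotient, into $B(\HH_G)/\bigoplus_n B(\HH^{(n)}_G)$, under which the image of $(q\otimes1)(\widetilde\Delta_{E_r})$ is $|S|$ times the class of $\pi(L_r)|_{\HH_G}$. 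Condition ii) then says that $0$ is a non-isolated point of the spectrum of this image, or more precisely that the spectrum meets $(0,\delta)$ for every $\delta>0$. Spectra shrink under $*$-homomorphisms, so $(q\otimes1)(\widetilde\Delta_{E_r})$ has no spectral gap in the maximal crossed-product quotient.

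Finally, I will lift the conclusion back to $\ControlledSupport[X]$. By the commutative diagram, the bottom isomorphism and $\Psi(\widetilde\Delta_{E_r})=\Delta_{E_r}$, the element $q(\Delta_{E_r})$ has no spectral gap in the maximal completion of $\ControlledSupport[X]/I$. Composing a representation of that quotient with $q$ yields a unital $*$-representation $\rho$ of $\ControlledSupport[X]$ with $\sigma(\rho(\Delta_{E_r}))\not\subset\{0\}\cup[\delta,\infty)$ for every $\delta$, so geometric property (T) fails.

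The main obstacle is the step where one identifies the completions: the maximal norm on $\ControlledSupport[X]/I$ must be compatible with the max crossed product, and the compression to $\HH_G$ must really descend to the quotient $B(\HH_G)/\bigoplus_n B(\HH^{(n)}_G)$. I would handle this by arguing that the isomorphism in the diagram is an algebraic $*$-isomorphism, so it induces an isomorphism of maximal $C^*$-completions. Any covariant representation factoring through the quotient by $\overline I\rtimes G$ then gives a representation of the max completion of $\ControlledSupport[X]/I$, and the passage to the Calkin-type quotient is handled blockwise using i).
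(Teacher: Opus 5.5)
Your proposal is correct and follows essentially the paper's route. The shared steps are: on $\HH_G$ the operator $\widetilde{\pi}(\widetilde{\Delta}_{E_r})$ reduces to $|S|\pi(L_r)$; condition i) sends $\overline{I}\rtimes G$ into $\bigoplus_n B(\HH^{(n)})$; and spectra shrink under $*$-homomorphisms out of the maximal crossed-product quotient, so a missing gap in the image gives a missing gap upstairs. The only difference is cosmetic. You compress directly to $\HH_G$, which is multiplicative only on the commutant of the projection onto $\HH_G$, so you implicitly need spectral permanence for the $C^*$-subalgebra generated by the class of $\widetilde{\Delta}_{E_r}$. The paper instead maps the whole quotient into $B(\HH)/\bigoplus_n B(\HH^{(n)})$ and reads off the missing gap from the reducing subspace $\HH_G$.
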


\begin{proof}
We denote the associated representation of the crossed product $\overline{\ControlledSupport[Y]}^{L^2}\rtimes G$ by $\widetilde{\pi}$. Since we have a quotient
    \begin{align*}
        \left(\overline{\ControlledSupport[Y]}^{L^2}\rtimes_{\max}G\right)/(\overline{I}\rtimes_{\max}G) \twoheadrightarrow \left(\overline{\ControlledSupport[Y]}^{L^2}\rtimes_{\widetilde{\pi}}G\right)/(\overline{I}\rtimes_{\widetilde{\pi}}G),
    \end{align*}
    it suffices to show that $\widetilde{\pi}(\widetilde{\Delta}_{E_r})$ does not have a spectral gap in the latter quotient.
    
    On $\HH_G$, we have $\widetilde{\pi}(\widetilde{\Delta}_{E_r})=|S|\pi(L_r)$. Therefore, the second condition implies that $\widetilde{\pi}(\widetilde{\Delta}_{E_r})$ does not have spectral gap in $B(\HH)/\oplus_n B(\HH^{(n)})$. By the first condition, we have that $\overline{I}\rtimes_{\widetilde{\pi}} G \subseteq \oplus_n B(\HH^{(n)})$ and therefore there is a unital $*$-homomorphism
    \begin{align*}
        \left(\overline{\ControlledSupport[Y]}^{L^2}\rtimes_{\widetilde{\pi}}G\right)/(\overline{I}\rtimes_{\widetilde{\pi}}G) \to B(\HH)/\oplus_n B(\HH^{(n)}),
    \end{align*}
    So $\widetilde{\pi}(\widetilde{\Delta}_{E_r})$ does not have a spectral gap also  in $\left(\overline{\ControlledSupport[Y]}^{L^2}\rtimes_{\widetilde{\pi}}G\right)/(\overline{I}\rtimes_{\widetilde{\pi}}G)$. 
\end{proof}

Now we recall the covariant system $(\pi,U,\HH)$ of the $C^*$-dynamical system $G\curvearrowright \overline{\ControlledSupport[Y]}^{L^2}$ constructed in \cite{DengToyotaGeometric}.
Let $\omega_n$ be the trace on $B(L^2(X_{t(n)},\mu_{t(n)}))$ and we denote by
$$\HH^{(n)}:=\{T\in L^2(X_{t(n)}\times X_{t(n)},\mu_{t(n)}\times \mu_{t(n)}):\omega_n(T^*T)<\infty\}$$ the Hilbert space consists of Hilbert-Schmidt kernels.
For kernel operators $k,k'\in \HH^{(n)}$, we have
$$\omega_n(k^*k')=\int_{M\times M}\overline{k(x,y)}k'(x,y)d(\mu_{t(n)}\times\mu_{t(n)})(x,y).$$
The group $G$ acts on $\HH^{(n)}$ by conjugation; we denote this action by $U^{(n)}$. Explicitly,
\[
(U_g^{(n)}k)(x,y)=k(g^{-1}x,g^{-1}y).
\]
Moreover, $\overline{\ControlledSupport[Y]}^{L^2}$ acts on $\HH^{(n)}$ by left multiplication; we denote this representation by $\pi^{(n)}$. For each $n$, let
\[
\HH_G^{(n)}:=\left\{k\in\HH^{(n)}:k(gx,gy)=k(x,y)\text{ for all }x,y\in X_{t(n)},\ g\in G \right\}
\]
and denote the $G$-invariant subspace of $\HH$ by $\HH_G=\bigoplus_n\HH_G^{(n)}$.

In \cite[Lemma~4.2]{DengToyotaGeometric}, Deng and the author proved that, when $M$ is a compact Lie group and $G\subseteq M$ is a discrete dense subgroup acting by left multiplication, the restriction of $\pi^{(n)}(L_r)$ to the $G$-invariant Hilbert--Schmidt kernels is unitarily equivalent to $L_{r,n}$ on $L^2(X_n,\mu_{t(n)})$. For a general isometric action, this identification has to be modified. Let $K=\overline{\alpha(G)}$, where the closure is taken in $\Isom(M)$.

\begin{lem}
   If the action $\alpha:G\curvearrowright M$ is isometric and ergodic, then $K$ acts transitively on $M$.
\end{lem}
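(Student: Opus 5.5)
We use that $\Isom(M)$ is a compact Lie group (Myers--Steenrod), so $K=\overline{\alpha(G)}$ is a compact subgroup acting continuously on $M$. The plan is to show that every $K$-orbit is a closed submanifold, and that ergodicity of $G$ (which preserves $\mu$) forces a single orbit to carry full measure. A closed orbit of full measure must then be all of $M$.

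\textbf{Step 1: $K$-invariant functions are exactly $G$-invariant ones.} Let $f\in L^2(M,\mu)$ be $G$-invariant. The map $\Isom(M)\to U(L^2(M,\mu))$, $k\mapsto (f\mapsto f\circ k^{-1})$, is strongly continuous: for continuous $f$ this follows from uniform continuity on compact $M$, and it extends to all of $L^2$ because continuous functions are dense and the operators are unitary (isometries preserve $\mu$). Hence the stabilizer of $f$ is a closed subgroup containing $\alpha(G)$, so it contains $K$. By ergodicity, every $K$-invariant measurable function is therefore a.e.\ constant.

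\textbf{Step 2: orbits are closed and separated.} Since $K$ is compact, each orbit $Kx$ is compact, and the orbit space $M/K$ is Hausdorff. Two distinct orbits $Kx\neq Ky$ are disjoint compact sets, so $d_M(Kx,Ky)=\varepsilon>0$. Take the continuous function $f(z)=d_M(z,Kx)$. It is $K$-invariant because $K$ acts by isometries and $Kx$ is $K$-invariant. By Step 1, $f$ is a.e.\ constant. Now $f<\varepsilon/2$ on the open neighbourhood $\{z: d_M(z,Kx)<\varepsilon/2\}$ of $Kx$, and $f>\varepsilon/2$ on an open neighbourhood of $Ky$. Both of these nonempty open sets have positive Riemannian measure, which contradicts $f$ being a.e.\ constant. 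Hence there is only one orbit, i.e.\ $K$ acts transitively.

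\textbf{Main obstacle.} The only delicate point is the continuity in Step 1, namely justifying that $K$ (and not just $G$) fixes $G$-invariant $L^2$ functions. This rests on strong continuity of the Koopman representation of the compact group $\Isom(M)$, which is standard. Freeness of the action is not needed for this lemma.
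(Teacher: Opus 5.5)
Your proof is correct and is essentially the paper's argument: the continuous, $K$-invariant (hence $G$-invariant) function $d_M(\cdot,Kx)$ is a.e.\ constant by ergodicity, and by continuity and full support of $\mu$ it is then constant, which leaves room for only one orbit. Step~1 is superfluous, because you only use the trivial direction ($K$-invariant functions are $G$-invariant, since $\alpha(G)\subseteq K$), so the Koopman-continuity ``obstacle'' you flag never actually arises.
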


\begin{proof}
    Fix $x_0\in M$ and consider the continuous $G$-invariant function $x\mapsto d_M(x, Kx_0)$. Since $K$ is a compact subgroup of $\Isom(M)$, we have that $d_M(x, Kx_0)=0$ if and only if $x\in Kx_0$. Therefore since the action is ergodic, this function must be constant,  which implies that $Kx_0=M$ and the action is transitive.
\end{proof}

Fix $x_0\in M$ and denote the stabilizer subgroup of $x_0$ by
\begin{align*}
    H:=K_{x_0}=\{g\in K:gx_0=x_0 \}.
\end{align*}
Now, we have the generalization of \cite[Lemma~4.2]{DengToyotaGeometric}. We denote by $L^2_K(X_{t(n)}\times X_{t(n)})$ (resp. $L^2_{H}(X_{t(n)},\mu_{t(n)})$) the $K$-invariant (resp. $H$-invariant) subspace of $L^2(X_{t(n)}\times X_{t(n)})$ (resp. $L^2(X_{t(n)},\mu_{t(n)})$). Note that since $K$ is the closure of $\alpha(G)$, we have $L^2_K(X_{t(n)}\times X_{t(n)})=\HH_G^{(n)}$.

\begin{lem}\label{intertwine}
    Let $M$ be a compact Riemannian manifold and $K\subset \Isom(M)$ be a compact subgroup which acts on $M$ transitively. Then for any fixed $x_0\in M$ the map $k\mapsto \sqrt{\mu_{t(n)}(M)} k(\cdot,x_0)$ defines an unitary
    \begin{align*}
        W:L^2_K(X_{t(n)}\times X_{t(n)}) \rightarrow L^2_{H}(X_{t(n)},\mu_{t(n)})
    \end{align*}
    such that 
    \begin{equation}\label{intertwine formula}
        \pi^{(n)}(L_r)|_{L^2_K(X_{t(n)}\times X_{t(n)})}=W^*\left(L_{r,n}|_{L^2_{H}(X_{t(n)},\mu_{t(n)})}\right)W.
    \end{equation}
    
\end{lem}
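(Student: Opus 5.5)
The plan is to use transitivity to identify $M$ with $K/H$ via $g\mapsto gx_0$, and then show that a $K$-invariant kernel is determined by its restriction to $M\times\{x_0\}$. This restriction is an $H$-invariant function, and $W$ is the corresponding restriction map. After that we check that $W$ is unitary and that it intertwines left multiplication by $L_r$ with $L_{r,n}$.

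\textbf{Step 1: $W$ is well defined and $H$-invariant.} For $k\in L^2_K(X_{t(n)}\times X_{t(n)})$ we have $k(gx,gy)=k(x,y)$ almost everywhere. Since $k$ is only defined almost everywhere, evaluation at $y=x_0$ has to be justified. I would disintegrate along the fibration $M\times M\to M$, $(x,y)\mapsto y$. Concretely, choose a measurable section $\sigma:M\to K$ with $\sigma(y)x_0=y$; this exists because $K\to K/H\cong M$ admits Borel sections. Then set $f(x):=k(\sigma(y)x,\sigma(y)x_0)$ for a suitable $y$. By invariance, $f$ is independent of $y$ up to null sets, so $f$ is a well-defined element of $L^2(M)$. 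Alternatively one can average: $f(x)=\frac{1}{\mu(M)}\int_M k(\sigma(y)x,y)\,d\mu(y)$, using that $\mu$ is $K$-invariant. $H$-invariance of $f$ follows from $k(hx,x_0)=k(hx,hx_0)=k(x,x_0)$.

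\textbf{Step 2: Isometry and surjectivity.} Write $k(x,y)=f(\sigma(y)^{-1}x)$, where $\sigma(y)^{-1}$ is well defined modulo $H$, which is fine because $f$ is $H$-invariant. The substitution $x\mapsto\sigma(y)x$ preserves $\mu$, so
\[
\|k\|^2=\int_M\int_M |f(\sigma(y)^{-1}x)|^2\,d\mu_{t(n)}(x)\,d\mu_{t(n)}(y)=\mu_{t(n)}(M)\,\|f\|^2 .
\]
Hence $W$ is an isometry. Conversely, for any $f\in L^2_H$ the formula $k(x,y):=\mu_{t(n)}(M)^{-1/2}f(\sigma(y)^{-1}x)$ defines a $K$-invariant kernel with $Wk=f$. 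Here $K$-invariance uses $\sigma(gy)^{-1}g\in H\sigma(y)^{-1}$. So $W$ is unitary.

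\textbf{Step 3: Intertwining.} The operator $\pi^{(n)}(L_r)k$ is the kernel $(L_{r,n}\otimes 1)k$, that is, $L_{r,n}$ applied in the first variable:
\[
(\pi^{(n)}(L_r)k)(x,y)=\int_{B_{r/t(n)}(x)}\bigl(k(x,y)-k(z,y)\bigr)\,d\mu_{t(n)}(z).
\]
Setting $y=x_0$ gives $W\pi^{(n)}(L_r)k=L_{r,n}Wk$ directly. We also need that $L_{r,n}$ preserves $L^2_H$ and $\pi^{(n)}(L_r)$ preserves $L^2_K$. Both follow because $L_r$ commutes with isometries: balls and $\mu$ are preserved, which is the $K$-version of Lemma~\ref{local and group laplacians}(2). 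Combining, $W\pi^{(n)}(L_r)=L_{r,n}W$ on $L^2_K$, which is \eqref{intertwine formula}.

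\textbf{Main obstacle.} The main difficulty is the measure-theoretic justification of evaluating an $L^2$ kernel at $y=x_0$. I would handle it by transferring everything to $K\times K$ with Haar measure. There a $K$-invariant function on $M\times M$ corresponds to a function on $H\backslash K/H$-type double cosets, and the slice at $x_0$ becomes a genuine $L^2$ function on $K/H$ by Fubini. This identification makes the isometry computation in Step 2 rigorous.
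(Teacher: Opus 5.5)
Your proposal is correct and follows essentially the same route as the paper: the same restriction map $W$, the isometry via transitivity and the $K$-invariance of $\mu$, surjectivity by lifting an $H$-invariant $f$ to $(x,g)\mapsto f(g^{-1}x)$ and descending through $K/H\cong M$, and the intertwining from the fact that left multiplication by $L_r$ acts only in the first variable. The only real difference is technical. The paper avoids the a.e.\ evaluation at $y=x_0$ by working with continuous $K$-invariant kernels (dense in $L^2_K$ by Haar averaging), extending $W$ by continuity, and checking the intertwining through the form $\omega_n({k'}^*\pi^{(n)}(L_r)k)$; your Borel section $\sigma$ (or a strictly invariant representative) is a heavier but valid way to handle the same point.
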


\begin{proof}
    First, note that since $K$  (resp. $H$) are compact, by averaging in terms of the Haar measure, any element $L^2_K(X_{t(n)}\times X_{t(n)})$ (resp. $L^2_{H}(X_{t(n)},\mu_{t(n)})$) admits an approximation by $K$-invariant continuous functions on $X_{t(n)}\times X_{t(n)}$ (resp. $H$-invariant continuous functions on $X_{t(n)}$).
    For a $K$-invariant continuous function $k\in C(X_{t(n)}\times X_{t(n)})$, define $Wk \in C(X_{t(n)})$ by
    \begin{align*}
        (Wk)(x):=\sqrt{\mu_{t(n)}(M)}k(x,x_0).
    \end{align*}
    Then first, $Wk$ is $H$-invariant since
    \begin{align*}
        (Wk)(hx)=\sqrt{\mu_{t(n)}(M)}k(hx,x_0)=\sqrt{\mu_{t(n)}(M)}k(x,h^{-1}x_0)=\sqrt{\mu_{t(n)}(M)}k(x,x_0)
    \end{align*}for $h\in H$. Moreover since the action is transitive, for any $y\in X_{t(n)}$, there exists $a_y\in K$ such that $y=a_yx_0$ and so
    \begin{align*}
        \int_M |k(x,y)|^2 d\mu_{t(n)}(x) &= \int_M |k(a_y^{-1}x,x_0)|^2 d\mu_{t(n)}(x)\\
        &=\int_M |k(z,x_0)|^2 d\mu_{t(n)}(z)
    \end{align*}
    is independent of $y$. Therefore we have
    \begin{align*}
        \int_M \int_M |k(x,y)|^2 d\mu_{t(n)}(x)d\mu_{t(n)}(y)=\mu_{t(n)}(M) \int_M |k(x,x_0)|^2 d\mu_{t(n)}(x)=\|Wk\|^2_{L^2(X_{t(n)},\mu_{t(n)})}.
    \end{align*}
    Thus $W$ can be extended to an isometry 
    \begin{equation}
        W:L^2_K(X_{t(n)}\times X_{t(n)}) \rightarrow L^2_{H}(X_{t(n)},\mu_{t(n)}).
    \end{equation}
    We show that it is surjective. Let $f\in C(X_{t(n)})$ be an $H$-invariant function.  We define $k\in C(X_{t(n)} \times K)$ by $k(x,g):=\frac{1}{\sqrt{\mu_{t(n)}(M)}}f(g^{-1}x)$. Then we have $$k(gx,gg')=\frac{1}{\sqrt{\mu_{t(n)}(M)}}f((gg')^{-1}gx)=\frac{1}{\sqrt{\mu_{t(n)}(M)}}f((g')^{-1}x)=k(x,g')$$ for all $x\in X_{t(n)}$ and $g,g'\in K$, therefore $k$ is $K$-invariant. Moreover, for $h\in H$, we have $k(x,gh)=\frac{1}{\sqrt{\mu_{t(n)}(M)}}f(h^{-1}g^{-1}x)=k(x,g)$, since $f$ is $H$-invariant. Therefore, $k$-descends to a continuous function $\widetilde{k}$ on
    \begin{align*}
        X_{t(n)}\times (K/H)\cong X_{t(n)}\times X_{t(n)}; (x,gH)\mapsto (x,gx_0).
    \end{align*}
     Then by construction, we have
    \begin{align*}
        (W\widetilde{k})(x)=\sqrt{\mu_{t(n)}(M)}\widetilde{k}(x,x_0)=f(x)
    \end{align*}
    So we have $W\widetilde{k}=f$ and $W$ is surjective. 

    It remains to prove \eqref{intertwine formula}. For $k,k'\in L^2_K(X_{t(n)}\times X_{t(n)})$, we have
    \begin{align*}
    &\omega_n\left(\left({k'}^*(\pi^{(n)}(L_r) k\right)\right)\\
    =&\int_M\left(\int_M\overline{k'(x,y)}\phi(x)k(x,y)d\mu_{t(n)}(x)\right)d\mu_{t(n)}(y)\\
    &-\int_{M}\left(\int_M \overline{k'(x,y)}\left(\int_{M}\chi_{B_{\frac{r}{t(n)}(x)}}(z)k(z,y)d\mu_{t(n)}(z)\right)d\mu_{t(n)}(x)\right)d\mu_{t(n)}(y)\\
    \end{align*}
    By the change of variables $x\mapsto a_yx$, this is equal to
\begin{align*}&\int_M\left(\int_M\overline{k'(a_y^{-1}x,x_0)}\phi(x)k(a_y^{-1}x,x_0)d\mu_{t(n)}(x)\right)d\mu_{t(n)}(y)\\
    &-\int_{M}\left(\int_M \overline{k'(a_y^{-1}x,x_0)}\left(\int_{M}\chi_{B_{\frac{r}{t(n)}(x)}}(z)k(a_y^{-1}z,x_0)d\mu_{t(n)}(z)\right)d\mu_{t(n)}(x)\right)d\mu_{t(n)}(y)\\
    =&\int_M\left(\int_M\overline{k'(a_y^{-1}x,x_0)}\phi(x)k(a_y^{-1}x,x_0)d\mu_{t(n)}(x)\right)d\mu_{t(n)}(y)\\
    &-\int_{M}\left(\int_M \overline{k'(a_y^{-1}x,x_0)}\left(\int_{M}\chi_{B_{\frac{r}{t(n)}(a_y^{-1}x)}}(z)k(z,x_0)d\mu_{t(n)}(z)\right)d\mu_{t(n)}(x)\right)d\mu_{t(n)}(y)\\
    =&\mu_{t(n)}(M)\left(\int_M \overline{k'(x,x_0)}\phi(x)k(x,x_0)d\mu_{t(n)}(x)\right)\\
    &-\mu_{t(n)}(M)\left(\int_M \overline{k'(x,x_0)}\left(\int_{M}\chi_{B_{\frac{r}{t(n)}(x)}}(z)k(z,x_0)d\mu_{t(n)}(z)\right)d\mu_{t(n)}(x)\right)\\
   = &\left\langle \sqrt{\mu_{t(n)}(M)}k'(\cdot,x_0), (L_r)(\sqrt{\mu_{t(n)}(M)}k(\cdot,x_0)) \right\rangle_{L^2(M,\mu_{t(n)})}=\langle Wk',L_r(Wk) \rangle_{L^2(M,\mu_{t(n)})}
\end{align*}
This proves \eqref{intertwine formula}.
\end{proof}

To verify condition~(ii) of Proposition~\ref{sufficient}, it remains to construct almost $L_r$-invariant vectors in the $H$-invariant subspace $\bigoplus_nL_H^2(X_{t(n)},\mu_{t(n)})$. In \cite{DengToyotaGeometric}, Deng and the author constructed such vectors on the full $L^2$-space by comparing $L_r$ with the heat operators $\left(1-\exp\left(-\frac{\Delta_M}{t(n)^2}\right)\right)_n$. The following lemma shows that the same spectral information remains available in the $H$-invariant subspace. In particular,
\[
\sigma(\Delta_M|_{L_H^2(M)})
=
\sigma(\Delta_M).
\]
\begin{lem}\label{equivariant kernel}
    Suppose that $M$ is a compact Riemannian manifold and $K$ is a compact subgroup of $\Isom(M)$ that acts on $M$ transitively. Fix $x_0\in M$ and denote by $H:=K_{x_0}$ the stabilizer subgroup of $x_0$. Let $\Delta_M$ be the scalar Laplace--de Rham operator on $M$, i.e. the restriction of Hodge Laplacian to zero-forms (smooth functions). If $\lambda$ is an eigenvalue of $\Delta_M$, then we have that $\ker(\Delta_M-\lambda) \cap L^2_H(M)\neq \{0\}$. 
\end{lem}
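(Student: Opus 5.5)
Since $K$ acts by isometries, $\Delta_M$ commutes with the action of $K$ on $C^\infty(M)$, given by $(g\cdot f)(x)=f(g^{-1}x)$. Hence each eigenspace $V_\lambda:=\ker(\Delta_M-\lambda)$, which is finite dimensional by ellipticity on a compact manifold, is a finite-dimensional representation of the compact group $K$. The plan is to produce a nonzero $H$-invariant vector in $V_\lambda$ by averaging over $H$. Averaging an arbitrary eigenfunction $f$ could give zero, so we choose $f$ carefully and use the evaluation at $x_0$.

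\textbf{Step 1.} Choose $f\in V_\lambda$ with $f(x_0)\neq 0$. Take any nonzero $f_1\in V_\lambda$; it is smooth, so $f_1(y)\neq 0$ for some $y\in M$. By transitivity (the preceding Lemma, or the hypothesis here) there is $a\in K$ with $ax_0=y$. Then $f:=f_1\circ a$ lies in $V_\lambda$, because $\Delta_M$ commutes with isometries, and $f(x_0)=f_1(y)\neq 0$.

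\textbf{Step 2.} Average over $H$. Let $dh$ be the normalized Haar measure on the compact group $H$, which is closed in $K$, and set
\[
Pf(x):=\int_H f(h^{-1}x)\,dh .
\]
Then $Pf$ is $H$-invariant. It lies in $V_\lambda$, since it is an integral of elements of the finite-dimensional space $V_\lambda$, which is $K$-invariant and closed; continuity of $h\mapsto f\circ h^{-1}$ into $V_\lambda$ follows from continuity of the $K$-action. Finally, $Pf(x_0)=\int_H f(x_0)\,dh=f(x_0)\neq 0$ because $H$ fixes $x_0$. Thus $Pf$ is a nonzero element of $\ker(\Delta_M-\lambda)\cap L^2_H(M)$.

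\textbf{Main obstacle.} The only delicate point is justifying that the averaging preserves the eigenspace and that point evaluation makes sense. Both rest on $V_\lambda$ being a finite-dimensional space of smooth functions on which $K$ acts continuously. This follows from elliptic regularity, together with continuity of the $K$-action on $C(M)$ in the sup norm: $K$ is a compact group of isometries, and each $f$ is uniformly continuous. On the finite-dimensional $V_\lambda$ all norms are equivalent, so the Bochner integral in $L^2$ agrees with the pointwise integral. The remark $\sigma(\Delta_M|_{L^2_H(M)})=\sigma(\Delta_M)$ then follows immediately. $\Delta_M$ has discrete spectrum consisting of eigenvalues, and it preserves $L^2_H(M)$, so its spectrum there is contained in $\sigma(\Delta_M)$; the lemma shows every eigenvalue is attained there.
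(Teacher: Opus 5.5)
Your argument is correct, but it takes a different route from the paper. The paper does not average over $H$. For an orthonormal basis $\xi_1,\dots,\xi_n$ of $E_\lambda=\ker(\Delta_M-\lambda)$, it forms the kernel $p_\lambda(x,y)=\sum_i\xi_i(x)\overline{\xi_i(y)}$ of the spectral projection onto $E_\lambda$. It observes that $p_\lambda(gx,gy)=p_\lambda(x,y)$ for $g\in K$, and takes $\xi:=p_\lambda(\cdot,x_0)=\sum_i\overline{\xi_i(x_0)}\,\xi_i$. Membership in $E_\lambda$ is read off from this formula, and $H$-invariance follows from $\xi(hx)=p_\lambda(x,h^{-1}x_0)=\xi(x)$. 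Nonvanishing comes from transitivity: $x\mapsto p_\lambda(x,x)$ is constant with $\int_M p_\lambda(x,x)\,d\mu=\dim E_\lambda>0$, so $\xi(x_0)>0$. You instead use transitivity to move a nonzero value of some eigenfunction to $x_0$, and then obtain $H$-invariance by Haar averaging, with $Pf(x_0)=f(x_0)$ keeping it nonzero. The two arguments are closely linked: $p_\lambda(\cdot,x_0)$ is the vector in $E_\lambda$ that represents the $H$-invariant functional $f\mapsto f(x_0)$ through the inner product, and your Step~1 says exactly that this functional is nonzero. The paper's version gives an explicit, canonical element (the zonal function at $x_0$). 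It also avoids Haar measure on $H$ and the continuity and vector-valued integration issues you handled in your last paragraph. Yours is the standard projection-onto-fixed-vectors argument and makes the role of the stabilizer clearer.
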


\begin{proof}
    We denote by $P_{\lambda}$ the spectral projection onto $E_{\lambda}$.
    Let $\xi_1,\cdots,\xi_n$ be an orthonormal basis of $E_{\lambda}$. Then for any $\eta\in L^2(M)$, we have
    \begin{align*}
        (P_{\lambda}\eta)(x)=\sum_{i=1}^n \left(\int_M \overline{\xi_i(y)}\eta(y)d\mu (y)\right) \xi_i(x)&=\int_M \left(\sum_{i=1}^n\xi_i(x)\overline{\xi_i(y)}\right)\eta(y)d\mu (y)\\&=\int_M p_{\lambda}(x,y)\eta(y)d\mu (y)
    \end{align*}
    by denoting $p_{\lambda}(x,y):=\sum_{i=1}^n\xi_i(x)\overline{\xi_i(y)}$. Since $K$ acts on $M$ by isometries, $p_{\lambda}(gx,gy)=p_{\lambda}(x,y)$ for all $x,y\in M$ and $g\in K$. Therefore by transitivity, the function $x \mapsto p_{\lambda}(x,x)$ is constant and also
    \begin{align*}
        \int_M p_{\lambda}(x,x) d\mu(x)=\int_M \sum_{i=1}^n\xi_i(x)\overline{\xi_i(x)} d\mu(x) =\dim E_{\lambda}.
    \end{align*}
    So we have that the continuous function $\xi:=p_{\lambda}(\cdot,x_0)$ is non-zero. Now, it suffices to prove the following (i) $\xi$ is $H$-invariant, and (ii) $\xi\in E_{\lambda}$. For the first claim, 
    \begin{align*}
        \xi(hx)=p_{\lambda}(hx,x_0)=p_{\lambda}(x,h^{-1}x_0)=p_{\lambda}(x,x_0)
    \end{align*}
    for all $x\in M$ and $h\in H$ by the $K$-invariance of $p_{\lambda}$.
    The second claim follows since $\xi=\sum \overline{\xi_i}(x_0) \xi_i\in E_{\lambda}$.
\end{proof}

Before proving Theorem~\ref{main}, we need the following comparison of local Laplacians with different parameter, to compare local Laplacian and de-Rham Laplacian in the restricted quotient $B(\oplus L^2_{H}(X_{t(n)},\mu_{t(n)}))/\oplus B(L^2_{H}(X_{t(n)},\mu_{t(n)}))$.
\begin{lem}
    Let $0<r<R$ and let $N\in\mathbb{N}$ be the smallest integer such that $Nr>R$. Then there exists $C_{r,R}>0$ such that 
    \begin{align}\label{comparison local laplacians}
        L_{R,n}\leq C_{r,R} L_{r,n}
    \end{align}
     for every $n$.
\end{lem}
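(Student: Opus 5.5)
Throughout, $\langle \cdot,\cdot\rangle$ denotes the inner product of $L^2(X_{t(n)},\mu_{t(n)})$. We work with the quadratic forms. By the symmetry of the kernel,
\[
\langle \xi, L_{r,n}\xi\rangle=\frac12\int_M\int_M \chi_{\{d_M(x,y)<r/t(n)\}}\,|\xi(x)-\xi(y)|^2\,d\mu_{t(n)}(y)\,d\mu_{t(n)}(x).
\]
The same formula holds for $L_{R,n}$ with $R$ in place of $r$. So it suffices to bound the energy at scale $R/t(n)$ by a constant multiple of the energy at scale $r/t(n)$, with the constant independent of $n$. The natural tool is a chaining argument. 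If $d_M(x,y)<R/t(n)$, then a minimizing geodesic from $x$ to $y$ has length less than $R/t(n)<Nr/t(n)$. Take $N+1$ equally spaced points $x=p_0,p_1,\dots,p_N=y$ on it; consecutive points are at distance less than $R/(Nt(n))$. There is slack $\epsilon:=r-R/N>0$, which lets us replace each exact point $p_i$ by an arbitrary point $z_i$ of the ball $B_i:=B_{\epsilon/(2t(n))}(p_i;d_M)$ for $0<i<N$. Every consecutive pair $(z_{i},z_{i+1})$ then still satisfies $d_M(z_i,z_{i+1})<r/t(n)$, with $z_0=x$ and $z_N=y$.

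The key steps are as follows. By Cauchy--Schwarz,
\[
|\xi(x)-\xi(y)|^2\le N\sum_{i=0}^{N-1}|\xi(z_i)-\xi(z_{i+1})|^2 .
\]
Average this inequality over $(z_1,\dots,z_{N-1})\in B_1\times\dots\times B_{N-1}$ with respect to normalized product measure. Then integrate over pairs $(x,y)$ with $d_M(x,y)<R/t(n)$. Each term on the right becomes an integral of $|\xi(u)-\xi(v)|^2$ against a kernel $K_i(u,v)$. This kernel is supported on $\{d_M(u,v)<r/t(n)\}$, and we want to bound it by a constant times $t(n)^{m}$ in the normalization of $\mu_{t(n)}$. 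Here it matters that $\mu_{t(n)}=t(n)^m\mu$ is the rescaled measure, so that balls of radius $c/t(n)$ have $\mu_{t(n)}$-measure comparable to $c^m$ uniformly in $n$ and in the center.

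Uniform comparability follows from compactness of $M$: for large $t(n)$ the exponential map gives uniform bi-Lipschitz charts at scale $1/t(n)$. The finitely many small $t(n)$ are handled separately, since for each fixed $n$ both operators are bounded and $L_{r,n}$ has kernel equal to the constants by connectedness.

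The main obstacle is the bookkeeping of the Jacobian in this averaging. The points $p_i$ depend measurably, but not smoothly, on $(x,y)$ because of the cut locus. To avoid this, I would not parametrize by geodesics. Instead I would define the chain through iterated balls. Write $\langle\xi,L_{R,n}\xi\rangle$ as an integral over $x,y$, insert the identity $1=\frac{1}{\mu_{t(n)}(B)}\int_B d\mu_{t(n)}(z)$ for suitable intermediate balls, and use the triangle inequality in $L^2$. Volume doubling, uniform at scale $1/t(n)$, then bounds the multiplicities: each pair $(u,v)$ with $d_M(u,v)<r/t(n)$ is counted at most $C(N,r,R,M)$ times. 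This yields $C_{r,R}$ depending only on $r$, $R$ and the geometry of $M$. Combining the two ranges of $n$ gives the stated inequality.
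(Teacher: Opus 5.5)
Your argument is correct, but after the common first step it takes a different route from the paper's. Both proofs rest on two ingredients. The first is chaining a pair at distance $<R/t(n)$ through $N$ steps of length $<r/t(n)$, with slack. The second is the uniform lower bound on $\mu_{t(n)}(B_{c/t(n)}(p))$.

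The paper turns this into the pointwise kernel inequality $k_{R,n}\le C k_{r,n}^{*N}$. It then uses that $\phi_n$ is constant, which holds because $K$ acts transitively by isometries, to get the operator identity $L_{k^{*N}}=\phi_n^N-A_{r,n}^N$. The bound $\phi_n^N-A_{r,n}^N\le N\phi_n^{N-1}(\phi_n-A_{r,n})$ follows by functional calculus, since $\|A_{r,n}\|\le\phi_n$.

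You instead compare Dirichlet forms directly: Cauchy--Schwarz along chains, then bounds on the pushed-forward kernels $K_i$. This needs uniform two-sided volume bounds for balls at scale $1/t(n)$ rather than constancy of $\phi_n$. So your version proves the lemma on any compact Riemannian manifold with no homogeneity hypothesis. That matters because the lemma as stated does not mention transitivity; the paper's proof takes it from the surrounding setting. The paper's route is shorter and gives the explicit constant $N\phi_n^{N-1}/\varepsilon^{N-1}$.

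Two remarks on your write-up.

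First, the Jacobian you flag as the main obstacle is not a real one. For fixed $(x,y)$, the conditional law of $(z_i,z_{i+1})$ has density $\chi_{B_i}(u)\chi_{B_{i+1}}(v)/\bigl(\mu_{t(n)}(B_i)\mu_{t(n)}(B_{i+1})\bigr)$, and $B_i\subseteq B_{(R+\epsilon)/t(n)}(x)$. Crude indicator bounds then give $K_i\le C\,\chi_{\{d_M<r/t(n)\}}$ with $C$ uniform in $n$. The end terms $i=0,N-1$ work the same way, and all you need is a Borel choice of the points $p_i(x,y)$. Your ``iterated balls'' variant can also be made precise. Average with the normalized chain density $\chi(x,z_1)\cdots\chi(z_{N-1},y)/k_{r,n}^{*N}(x,y)$, and use the paper's lower bound $k_{r,n}^{*N}\ge\varepsilon^{N-1}$ on $\{d^{t(n)}<R\}$.

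Second, for the finitely many small $t(n)$, knowing that the kernel of $L_{r,n}$ is the constants is not enough. You also need $0$ to be isolated in $\sigma(L_{r,n})$. This holds because $A_{r,n}$ is compact and $\phi_n$ is bounded below. The case split is unnecessary anyway, since the ball-volume bounds at scale $1/t(n)$ are uniform for all $t(n)\ge t(1)$.
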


\begin{proof}
    Let $k_{r,n}(x,y):=t(n)^m\chi_{B_{\frac{r}{t(n)}}(x;d_M)}(y)$. First, we prove $k_{R,n}\leq Ck_{r,n}^{*N}$ for some $C>0$ independent of $n$. Fix $\delta>0$ such that $(r-2\delta)N>R$. Fix $x,y\in M$ with $d^{t(n)}(x,y)\leq R$ and then we can take points $x=p_0,p_1,\cdots, p_N=y\in M$ such that $d^{t(n)}(p_i,p_i+1)\leq r-2\delta$. Note that $\mu_{t(n)}(B_{\frac{\delta}{t(n)}}(p;d_M))$ is uniformly bounded below by some $\varepsilon>0$ independently of $p\in M$ and $n$. Now we have that 
    \begin{align*}
        k_{r,n}^{*N}(x,y)&=\int_M\cdots \int_M \chi_{B_{\frac{r}{t(n)}}(x;d_M)}(z_1)\cdots \chi_{B_{\frac{r}{t(n)}}(z_{N-1};d_M)}(y) d\mu_{t(n)}(z_{1})\cdots d\mu_{t(n)}(z_{N-1}) \\
        &\geq \mu_{t(n)}(B_{\frac{\delta}{t(n)}}(p_1;d_M))\cdot \cdots \cdot \mu_{t(n)}(B_{\frac{\delta}{t(n)}}(p_{N-1};d_M))\geq \varepsilon^{N-1}.
    \end{align*}
    Since if $d^{t(n)}(x,y)> R$, then we have $k_{R,n}(x,y)=0$, we have $k_{R,n}\leq Ck_{r,n}^{*N}$ for $C=\frac{1}{\varepsilon^{N-1}}$.

    Therefore, $L_{R,n}$ is dominated by the operator $L_{k^{*N}}$ defined by
\begin{align*}
    (L_{k^{*N}}f)(x):=\int_M k^{*N}(x,y)(f(x)-f(y))d\mu(y).
\end{align*}  
Since for isometric transitive action, the function
$$\phi_n(x):=\mu_{t(n)}(B_{\frac{r}{t(n)}}(x;d_M))=t(n)^m\cdot\mu(B_{\frac{r}{t(n)}}(x;d_M))$$ 
is constant, denoted by just $\phi_n$. If we denote by $A_{r,n}$ the kernel operator corresponding to $k_{r,n}$, then we have that
\begin{align*}
    &(L_{k^{*N}}f)(x)\\=&\int_M\int_M\cdots \int_M k(x,z_1)\cdots k(z_{n-1},y)f(x) d\mu(z_1)\cdots d\mu(z_{n-1})d\mu(y)\\
    &-\int_M\int_M\cdots \int_M k(x,z_1)\cdots k(z_{n-1},y)f(y) d\mu(z_1)\cdots d\mu(z_{n-1})d\mu(y)\\
    =&\int_M f(x) \left(\int_M k(x,z_1)d\mu(z_1)\right) \left(\int_M k(z_1,z_2)d\mu(z_2)\right)\cdots \left(\int_M k(z_{n-1},y)d\mu(y)\right)\\
    &-\int_M k(x,z_1)\cdots\left(k(z_{n-2},z_{n-1})\left(\int_M k(z_{n-1},y)f(y)d\mu(y)\right)d\mu(z_{n-1})\right)\cdots d\mu(z_1)\\
    =&\phi_n^Nf(x)-(A_{r,n}^Nf)(x).
\end{align*}
Therefore, we have
\begin{align*}
    L_{k^{*N}}=\phi_n^N-A_{r,n}^{N}=\phi_n^{N-1}\left(\phi_n-A_{r,n}\right)\left(1+\frac{A_{r,n}}{\phi_n}+\cdots +\left(\frac{A_{r,n}}{\phi_n}\right)^{N-1}\right) \leq N\phi_n^{N-1}L_{r,n}.
\end{align*}
and since $\phi_n$ is uniformly bounded, we have the desired inequality \eqref{comparison local laplacians}.
\end{proof}

\begin{proof}[Proof of the Theorem~\ref{main}]
By Lemma~\ref{intertwine}, to prove that $\pi(L_r)|_{\HH_G}$ does not have spectral gap in the quotient $B(\HH_G)/\oplus_n B(\HH^{(n)}_G)$, it suffices to show that $$\left(L_{r,n}|_{L^2_{H}(X_{t(n)},\mu_{t(n)})}\right)_n$$ does not have spectral gap in $B(\oplus L^2_{H}(X_{t(n)},\mu_{t(n)}))/\oplus B(L^2_{H}(X_{t(n)},\mu_{t(n)}))$. Since $H\subseteq K$ acts on $M$ by isometries, $\Delta_M$ can be restricted to $L^2_{H}(M)$. By \cite[Lemma~3.3]{DengToyotaGeometric} there exist $ C,D>0$ such that for every $\varepsilon>0$ there exists $R>0$ with
\begin{align}\label{local-deRham}
     0\leq L_{r,n}\leq C\left(1-\exp{\left(-\frac{\Delta_M}{t(n)^2}\right)}\right) \leq DL_{R,n}+\varepsilon.
\end{align}
Inequalities \eqref{local-deRham} and \eqref{comparison local laplacians} implies that $L_{r,n}$ is dominated by $1-\exp{\left(-\frac{\Delta_M}{t(n)^2}\right)}$ and they have the same kernel in  the quotient $B(\oplus L^2_{H}(X_{t(n)},\mu_{t(n)}))/\oplus B(L^2_{H}(X_{t(n)},\mu_{t(n)}))$ (after faithfully represented on a Hilbert space). Therefore, it suffices to show that $$\left(1-\exp{\left(-\frac{\Delta_M|_{L^2_{H}(M)}}{t(n)^2}\right)}\right)_n$$ does not have spectral gap in $B(\oplus L^2_{H}(M)/\oplus B(L^2_{H}(M))$. In the proof of \cite[Proposition~3.2]{DengToyotaGeometric}, using the Weyl's law (i.e.
\begin{align*}
    \frac{\# \{\text{eigenvalues of }\Delta_M \text{ less than }\Lambda\}}{\Lambda^{m/2} }\to C
\end{align*}
for some $C$),
it was shown that for every $\varepsilon>0$ the union of spectra
\begin{align*}
   \bigcup_n \sigma\left(1-\exp{\left(-\frac{\Delta_M}{t(n)^2}\right)}\right)
\end{align*}
has an accumulation point $\lambda$ in $(0,\varepsilon)$. By Lemma~\ref{equivariant kernel}, $\lambda$ is also an accumulation point of
\begin{align*}
   \bigcup_n \sigma\left(1-\exp{\left(-\frac{\Delta_M|_{L^2_H(M)}}{t(n)^2}\right)}\right).
\end{align*}
This implies that $\lambda$ is in the spectrum of $\left(1-\exp{\left(-\frac{\Delta_M|_{L^2_H(M)}}{t(n)^2}\right)}\right)_n$ in the quotient $B(\oplus L^2_{H}(M))/\oplus B(L^2_{H}(M))$.
\end{proof}

\section{Warped cones for non-isometric p.m.p actions}

In this section, we study geometric property (T) for warped cones associated to not necessarily isometric but probablity measure preserving actions on compact Riemannian manifolds. In the first subsection, we prove that Theorem~\ref{main} can not be extended to non-isometric actions. Then, in the second subsection, we prove that for free probability measure preserving actions, if $G$ does not have Kazhdan's property (T), then the warped cone does not have geometric property (T)

\subsection{Example of an ergodic action whose warped cone has geometric property (T)}

In this subsection, we show that Theorem~\ref{main} does not extend to general non-isometric actions. In particular, we prove the following:
\begin{thm}\label{positive example}
   For $m\geq 3$, the warped cone $\bigsqcup_n (M,\delta_G^{n})$ associated to the action $SL_m(\mathbb{Z}) \curvearrowright \mathbb{R}^m/\mathbb{Z}^m$ has geometric property (T).
\end{thm}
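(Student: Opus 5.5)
The plan is to exploit the fact that, unlike the isometric case, the translations of the torus are normalized by $G=SL_m(\mathbb Z)$ and interact with it through the semidirect product $SL_m(\mathbb Z)\ltimes\mathbb R^m$, which has \emph{relative} property (T) with respect to $\mathbb R^m$ (Burger / Shalom type result, valid for $m\ge2$). Combined with Kazhdan's property (T) of $SL_m(\mathbb Z)$ for $m\ge 3$, this should force a spectral gap for $\rho(\Delta_{E_r})$ in every representation.

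\textbf{Step 1: splitting the representation.} By the characterization via $\sigma(\rho(\Delta_{E_r}))\subset\{0\}\cup[\delta,\infty)$, fix a unital $*$-representation $\rho$ of $\ControlledSupport[X]$. Decompose $\HH=\overline{\rho(I)\HH}\oplus\HH'$, where $I=\bigoplus_n B(L^2(M,\mu_n))$.
\begin{itemize}
\item On the first summand, $\rho$ is a multiple of the identity representation on each block $L^2(M,\mu_n)$. A uniform gap there is exactly the expander property of the warped cone. By Vigolo's theorem \cite{Federico2019Measure}, this follows from the spectral gap of $SL_m(\mathbb Z)\curvearrowright\mathbb T^m$, which holds by property (T).
\item On $\HH'$, the representation factors through $\ControlledSupport[X]/I\cong(\ControlledSupport[Y]/I)\rtimes_{\alg}G$ (the action is free on a full-measure set; I would pass to the free part or adapt the isomorphism). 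It therefore yields a covariant pair $(\pi,U)$ for $G\curvearrowright\overline{\ControlledSupport[Y]}/\overline I$.
\end{itemize}
On $\HH'$ I must show that $\rho(\Delta_{E_r})$ has no spectrum in $(0,\delta)$. I expect that in fact $\ker=0$ and there is a gap from $0$.

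\textbf{Step 2: building an $\mathbb R^m$-part.} For $v\in\mathbb R^m$, let $T_v=\bigoplus_n T_v^{(n)}$, where $T_v^{(n)}$ is translation by $v/t(n)$ on $\mathbb T^m$. Since $|v|$ is bounded for fixed $v$, $T_v$ has controlled support in $Y$, so $T_v\in\ControlledSupport[Y]$. Moreover $gT_vg^{-1}=T_{gv}$ for $g\in SL_m(\mathbb Z)$, and $v\mapsto T_v$ is a homomorphism. Hence $(g,v)\mapsto U_g\pi(T_v)$ is a unitary representation of $SL_m(\mathbb Z)\ltimes\mathbb R^m_{\mathrm{discrete}}$ on $\HH'$.

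The local Laplacian satisfies $L_r=\int_{|v|<r}(1-T_v)\,dv$ in each block, since on the flat torus $\phi_n$ is constant. Thus $\pi(L_r)$ is an averaged $\mathbb R^m$-Laplacian. I then rewrite Lemma~\ref{local and group laplacians}(1) (valid here since Lebesgue measure is preserved and $\phi$ is constant) as
\[
\rho(\Delta_{E_r})=|S|\phi-(|S|-U(\Delta_G))(\phi-\pi(L_r)).
\]
From this I aim to prove the two-sided estimate
\[
c\bigl(\pi(L_r)+U(\Delta_G)\bigr)\le\rho(\Delta_{E_r})\le C\bigl(\pi(L_r)+U(\Delta_G)\bigr)
\]
after replacing $r$ by a comparable radius, using the comparison lemma for local Laplacians.

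\textbf{Step 3: the gap argument.} Suppose a unit vector $\xi\in\HH'$ has $\langle\rho(\Delta_{E_r})\xi,\xi\rangle<\varepsilon$. Then $\xi$ is $\varepsilon'$-invariant under $S$ and, on average, under translations by $v$ with $|v|<r$. The set $S\cup\{v:|v|<r\}$ generates $SL_m(\mathbb Z)\ltimes\mathbb R^m$, and relative property (T) for the pair $(SL_m(\mathbb Z)\ltimes\mathbb R^m,\mathbb R^m)$ is stated for the topological group. So I would first show that $v\mapsto\pi(T_v)$ is weakly continuous on $\HH'$ after cutting to the nondegenerate part. Continuity at $v=0$ follows because $\|T_v-1\|$ is small on the smooth part of $\ControlledSupport[Y]$ modulo $I$ as $v\to0$, using the scaling by $t(n)\to\infty$.

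Relative property (T) then yields a nonzero $\mathbb R^m$-invariant vector $\eta$ close to $\xi$. But an $\mathbb R^m$-invariant vector is invariant under all translations by $v/t(n)$ with $|v|\le R$, for arbitrarily large $R$. I claim that such a vector is annihilated by $\overline{\ControlledSupport[Y]}/\overline I$ applied to it, or more precisely that the spectral projection of $\pi(L_R)$ at $0$ corresponds to the constant functions, which are a rank-one projection per block and hence lie in $I$. Thus $\eta=0$ on $\HH'$, a contradiction, giving a uniform $\delta$.

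\textbf{Main obstacle.} The delicate step is the last one. Making precise that $\mathbb R^m$-invariant vectors in the quotient representation must vanish requires showing $\pi(L_R)\ge c>0$ modulo $I$ for large $R$. This fails block-by-block, since low Fourier modes have small $L_R$-energy, and those modes are not in a finite-rank piece. I would try to handle it by proving that $\prod$ of many averaged translations $\frac1{|B_R|}\int_{B_R}T_v\,dv$ converges in norm, modulo $I$, to the projection onto constants as $R\to\infty$ uniformly in $n$. The difficulty is exactly the non-uniformity at low frequencies, which I would control using the Fourier decay $\widehat{\chi_{B_R}}(k/t(n))$ together with property (T) for $SL_m(\mathbb Z)$ acting on $\mathbb Z^m\setminus\{0\}$ frequencies, moving low frequencies to high ones via $G$.
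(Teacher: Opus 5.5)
There is a genuine gap, and it is more serious than the obstacle you flag: the conclusion you aim for in Step~3 is false. You want $\mathbb R^m$-invariant vectors in $\HH'$ to vanish, because the projection onto constants is rank one in each block ``and hence lies in $I$''. But $P=\bigoplus_n P_n$ has $\|P_n\|=1$ for all $n$, so $P\notin\overline I$. The state $\omega=\lim_{\mathcal U}\langle\xi_n,\,\cdot\,\xi_n\rangle$ built from normalized constants (as in the proof of Theorem~\ref{if not kazhdan}) vanishes on $\overline I$. Its GNS vector is nonzero, lives in a representation of $\ControlledSupport[X]/I$, is fixed by $G$ and every $T_v$, and is killed by $\rho(\Delta_{E_r})$. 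So the most you can hope for is that vectors orthogonal to $\ker\rho(\Delta_{E_r})$ are not almost invariant, and for that you need invariant vectors to lie in this kernel.

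That is where the second problem appears: in an arbitrary representation, $\rho(L_r)$ is not controlled by the unitaries $\rho(T_v)$. The formula $L_r=\int_{|v|<r}(1-T_v)\,dv$ is only a blockwise strong integral. Moreover, $L_r$ does not lie in the $C^*$-algebra generated by the $T_v$: its Fourier multiplier $|B_r|-\widehat{\chi_{B_r}}(k/t(n))$ would then be a uniform limit of trigonometric polynomials on a dense subset of $\mathbb R^m$, making the nonzero $C_0$-function $\widehat{\chi_{B_r}}$ almost periodic. Explicitly, take $t(n)=n$, $f_n(x)=e^{2\pi i n^2x_1}$ normalized, and a non-principal ultrafilter along which $e^{2\pi i n s}\to1$ for every $s\in\mathbb R$ (it exists by Dirichlet's simultaneous approximation). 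The resulting GNS vector $\Omega$ lies in a representation vanishing on $\overline I$ and is fixed by every $\rho(T_v)$, so continuity is not the issue. Yet $\langle\rho(L_r)\Omega,\Omega\rangle=\lim_{\mathcal U}\bigl(|B_r|-\widehat{\chi_{B_r}}(ne_1)\bigr)=|B_r|\neq0$. Thus relative property (T) of $(SL_m(\mathbb Z)\ltimes\mathbb R^m,\mathbb R^m)$ gives no handle on $\rho(\Delta_{E_r})$. Your suggested repair (Fourier decay plus moving low frequencies with $G$) is a blockwise argument; it only reproduces the expander gap, not a gap in every representation.

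The missing idea is to discretize so that the space Laplacian literally becomes the image of a group Laplacian. The paper restricts to the $G$-invariant lattice $Z_n=\mathbb Z^m/n\mathbb Z^m\subset\mathbb R^m/n\mathbb Z^m$. The warped cone is uniformly quasi-isometric to the Schreier graphs of the discrete group $H=\mathbb Z^m\rtimes SL_m(\mathbb Z)$ acting affinely on $Z_n$, so by coarse invariance it suffices to treat these graphs. There your translations $T_{e_i}$ become honest edges on $\ell^2(Z_n)$, where no oscillation inside a cell is possible. The graph Laplacian is two-sidedly comparable to $\lambda(\Delta_H)$ through explicit sums of squares $v^*v$, hence in every representation. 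Since $H$ has property (T) for $m\geq 3$ and $\pi\circ\lambda$ is a unital representation of $\mathbb C[H]$, one gets $\sigma(\pi(\lambda(\Delta_H)))\subseteq\{0\}\cup[\delta,\infty)$ for every $\pi$. No splitting off of $I$, continuity argument, or analysis of invariant vectors is needed. Replacing $\mathbb R^m$ by $\mathbb Z^m$ while staying on $L^2(\mathbb T^m)$ would not help: $e^{2\pi i n x_1}$ is fixed by translation by $e_i/n$ for all $i$, but $L_{r,n}$ multiplies it by $|B_r|-\widehat{\chi_{B_r}}(e_1)>0$, independently of $n$.
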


Let $G:=SL_m(\mathbb{Z})$ with a fixed finite symmetric generating set $S\ni I_m$, and let $M:=\mathbb{R}^m/\mathbb{Z}^m$
be the flat torus. The group $G$ acts on $M$ by
\[
A\cdot(v+\mathbb{Z}^m):=Av+\mathbb{Z}^m.
\]
We consider the sequence of scale parameters $t(n)=n$.
Then the rescaled metric space $(Y_n,d^n)$ is isometric to
$\mathbb{R}^m/(n\mathbb{Z}^m)$ equipped with the Euclidean metric.
We take the lattice $Z_n:=\mathbb{Z}^m/(n\mathbb{Z}^m)\subseteq Y_n$, which is invariant under the $G$-action. 

The semidirect product $H:=\mathbb{Z}^m\rtimes SL_m(\mathbb{Z})$ acts on $Z_n$ by 
\begin{align*}
    (v,A):x\mapsto Ax+v.
\end{align*}
Indeed, the composition of the action corresponds to the product of the semidirect product, $(v,A)\circ(w,B)x=(v+Aw,AB)x=(v,A)(w,B)x$. We denote by $\widetilde{S}:=\{(\pm e_i,I_m):i=1,\cdots m\}\cup \{(0,A):A\in S\}$ a generating set of $H$. The action $H\curvearrowright Z_n$ defines the Schreier graph
$\sch(H\curvearrowright Z_n)$: its vertex set is $Z_n$, and two vertices
$x,y\in Z_n$ are adjacent if $y=sx$ for some $s\in\widetilde S$.  We denote the associated graph distance on $Z_n$ by $d_H$. Note that the action $H\curvearrowright Z_n$ is transitive, and so the graph $Z_n$ is connected and $|Z_n|\to \infty$. The two types of edges in this Schreier graph correspond to the two types
of moves defining the warped metric. An edge $x\sim Ax$, for $A\in S$,
corresponds to a move by the group action, while an edge
$x\sim x\pm e_i$ corresponds to the local move $x\mapsto x\pm e_i$.
If $Z_n$ is equipped with the $\ell^1$-metric, then the warped metric on $Z_n$ agrees with the Schreier metric $d_H$. Moreover, since every point of $Y_n$ lies within a distance $\frac{m}{2}$ of $Z_n$, the inclusion
\[
(Z_n,\delta_G^n)\hookrightarrow (Y_n,\delta_G^n)
\]
is a quasi-isometry, uniformly in $n$.
 Since the $\ell^1$-metric and the Euclidean metric on $\mathbb{R}^m$
are bi-Lipschitz equivalent with constants depending only on $m$,
replacing the $\ell^1$-metric by the Euclidean metric does not change
the quasi-isometry type of the warped cone.

Thus we obtain the following

\begin{lem}
    The warped cone $\bigsqcup_n (M, \delta_G^n)$ associated to the action $G\curvearrowright M$ is quasi-isometric to the coarse disjoint union of the Schreier graphs $\bigsqcup_n (Z_n,d_H)$.
\end{lem}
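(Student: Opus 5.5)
The plan is to prove the lemma by composing three uniform (in $n$) quasi-isometries:
\[
(Z_n,d_H)\;\longrightarrow\;(Z_n,\delta_G^{n,\ell^1}|_{Z_n})\;\longrightarrow\;(Z_n,\delta_G^{n}|_{Z_n})\;\hookrightarrow\;(Y_n,\delta_G^{n}).
\]
Here $\delta_G^{n,\ell^1}$ denotes the warped metric on $Y_n=\mathbb{R}^m/n\mathbb{Z}^m$ built from the $\ell^1$ metric instead of the Euclidean one. The constants of each map must be independent of $n$, so that the maps assemble into a quasi-isometry of the coarse disjoint unions. Components sit at infinite distance on both sides, so no cross-component estimate is needed.

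\textbf{Step 1: $d_H$ versus $\delta_G^{n,\ell^1}$ on $Z_n$.}
\begin{itemize}
\item \emph{Upper bound.} An edge $x\sim x\pm e_i$ has $\ell^1$ length $1$. An edge $x\sim Ax$ with $A\in S$ costs $\ell(A)\le 1$ in the warped metric. Hence $\delta_G^{n,\ell^1}(x,y)\le d_H(x,y)$.
\item \emph{Lower bound.} Take a near-optimal chain $x=x_0,\dots,x_N=y$ in $Y_n$ with group elements $g_i$. Write each $g_i$ as a word of length $\ell(g_i)$ in $S$. Then round each intermediate point to a nearest lattice point of $Z_n$, at $\ell^1$ distance at most $m/2$.
\item \emph{Controlling the rounding.} A generator $A\in S$ is a linear map with $\ell^1$ operator norm at most $c_S:=\max_{A\in S}\|A\|_{1\to1}$. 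So applying $A$ to a rounded point instead of the true point moves the image by at most $c_S m/2$. The lattice points $Ax'$ and $A\bar x'$ are then joined by at most $c_Sm/2+m/2$ unit $\pm e_i$ steps.
\item \emph{Counting.} A translation segment of $\ell^1$ length $L$ between lattice-rounded endpoints costs at most $L+m$ edges.
\end{itemize}
The total is $d_H(x,y)\le C\,\delta_G^{n,\ell^1}(x,y)+C$, provided we first discard zero-cost or tiny steps. To do this I will normalize chains so that consecutive steps alternate between one generator and one translation, which can only decrease the warped cost. Each step then has cost bounded below or is absorbed into a neighbouring step, so the number of steps is bounded by a multiple of the total cost plus one. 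I expect this bookkeeping to be the main obstacle.

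\textbf{Step 2: $\ell^1$ versus Euclidean warped metrics.} We have $\|\cdot\|_2\le\|\cdot\|_1\le\sqrt m\|\cdot\|_2$ on $\mathbb{R}^m$, and this passes to the quotient metrics on $Y_n$. Substituting into the infimum defining $\delta_G$ termwise gives
\[
\delta_G^{n}\le\delta_G^{n,\ell^1}\le\sqrt m\,\delta_G^{n}.
\]
So the identity map is bi-Lipschitz, uniformly in $n$.

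\textbf{Step 3: density of $Z_n$ in $Y_n$.} Every point of $Y_n$ lies at $d^n$-distance at most $\sqrt m/2$ from $Z_n$, hence at warped distance at most $\sqrt m/2$ as well. So the inclusion $(Z_n,\delta_G^n)\hookrightarrow(Y_n,\delta_G^n)$ is an isometric embedding onto a uniformly coarsely dense subset, and therefore a quasi-isometry with constants independent of $n$.

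Composing Steps 1–3 gives the claimed quasi-isometry between $\bigsqcup_n(M,\delta_G^n)$ and $\bigsqcup_n(Z_n,d_H)$, via the identification $(M,\delta_G^n)\cong(Y_n,\delta_G^n)$ given by rescaling by $t(n)=n$.
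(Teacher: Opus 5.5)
Your proposal is correct and follows the same route as the paper: identify $d_H$ with the $\ell^1$-warped metric on the lattice $Z_n$, use the uniform coarse density of $Z_n$ in $Y_n$, and pass between the $\ell^1$ and Euclidean metrics by bi-Lipschitz equivalence. Your Step~1 rounding argument goes further than the paper. It re-rounds after each generator, which costs only a bounded number of extra edges because $S\subset SL_m(\mathbb Z)$ preserves $\mathbb Z^m$ and acts with uniformly bounded $\ell^1$ operator norm, and it merges consecutive translations so that the number of steps is controlled by the cost. This makes explicit the comparison between the intrinsic warped metric on $Z_n$ and the one restricted from $Y_n$, which the paper's one-line density remark leaves implicit. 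One small correction: in that step, the lattice point you compare with $A\bar x'$ should be the rounding $\overline{Ax'}$, not $Ax'$ itself, which is generally not a lattice point.
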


Therefore, to prove Theorem~\ref{positive example}, it suffices to show that the coarse disjoint union of the Schreier graphs $\bigsqcup_n (Z_n,d_H)$
has geometric property (T).

Since $H$ acts on each $Z_n$, it induces a unitary representation
\[
\lambda_n:H\to \mathcal U(\ell^2(Z_n)).
\]
We denote by $\lambda:=\bigoplus_n\lambda_n$
the resulting unitary representation of $H$ on $\bigoplus_n\ell^2(Z_n)$.  This representation extends linearly to a unital $*$-homomorphism,
still denoted by $\lambda$,
\begin{align}\label{embedding}
   \lambda: \mathbb{C}[H]\to \mathbb{C}_u\left[\bigsqcup (Z_n,d_H)\right]
\end{align}
Let
\[
\Delta_H
:=
|\widetilde S|-\sum_{s\in\widetilde S}s
\in\mathbb C[H]
\]
be the group Laplacian associated to $\widetilde S$. Its image $\lambda(\Delta_H)$
is the Laplacian of the coarse disjoint union of the corresponding Schreier multigraphs. 
For $x,y\in Z_n$, define the edge multiplicity by
\[
m(x,y):=
\#\{s\in\widetilde S:sx=y\}.
\] Then we have
\begin{align*}
    \lambda(\Delta_H)=\frac{1}{2}\sum_{(x,y)} m(x,y) (e_{x,x}-e_{x,y}-e_{y,x}+e_{y,y}).
\end{align*}

On the other hand, let $\Delta$ denote the Laplacian of the underlying simple graph on $\bigsqcup_n (Z_n,d_H)$. Then
\[
\Delta
=
\frac12
\sum_{\substack{(x,y)\\m(x,y)>0}}
\bigl(
e_{x,x}-e_{x,y}-e_{y,x}+e_{y,y}
\bigr).
\]

\begin{lem}\label{lem:multigraph-simple}
    For any $*$-homomorphism $\pi$ of $\mathbb{C}_u\left[\bigsqcup (Z_n,d_H)\right]$, we have that
\begin{align}\label{multiple edge}
   \pi( \Delta)\leq \pi(\lambda(\Delta_H))\leq |\widetilde{S}|\pi(\Delta).
\end{align}

\end{lem}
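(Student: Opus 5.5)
The two Laplacians are built from the same rank-one positive pieces with different nonnegative weights, so the plan is to compare them edge by edge. For each unordered pair $\{x,y\}$ of distinct points of some $Z_n$, set
\[
P_{x,y}:=e_{x,x}-e_{x,y}-e_{y,x}+e_{y,y},
\]
which is the rank-one operator $(\delta_x-\delta_y)(\delta_x-\delta_y)^*$ and hence positive. Pairs with $x=y$, coming from loops such as $s=(0,I_m)$ or group elements fixing $x$, contribute $P_{x,x}=0$ and can be discarded. We then have
\[
\lambda(\Delta_H)=\sum_{\{x,y\}}\tfrac12\bigl(m(x,y)+m(y,x)\bigr)P_{x,y}
\quad\text{and}\quad
\Delta=\sum_{\{x,y\},\,m(x,y)>0}P_{x,y},
\]
where the factor $\tfrac12$ in the definitions disappears against the double counting of ordered pairs. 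Since $\widetilde S$ is symmetric, $m(x,y)=m(y,x)$, so the coefficient of $P_{x,y}$ in $\lambda(\Delta_H)$ is exactly $m(x,y)$. Moreover $1\le m(x,y)\le|\widetilde S|$ whenever $m(x,y)>0$.

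It follows that the differences
\[
\lambda(\Delta_H)-\Delta=\sum_{m(x,y)>0}\bigl(m(x,y)-1\bigr)P_{x,y}
\quad\text{and}\quad
|\widetilde S|\Delta-\lambda(\Delta_H)=\sum_{m(x,y)>0}\bigl(|\widetilde S|-m(x,y)\bigr)P_{x,y}
\]
are sums of positive elements with nonnegative coefficients. To make this work inside the algebra $\mathbb{C}_u[\bigsqcup(Z_n,d_H)]$ rather than only in $B(\bigoplus_n\ell^2(Z_n))$, I will write each difference as $T^*T$ with $T\in\mathbb{C}_u$, so that positivity survives every $*$-homomorphism $\pi$.

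The construction of $T$ goes as follows. Enumerate $\widetilde S=\{s_1,\dots,s_k\}$ and let $V_s$ be the partial translation $\delta_x\mapsto\delta_{sx}$, which is the element $\lambda(s)$ and has propagation at most $1$. We have $\lambda(\Delta_H)=\tfrac12\sum_s(1-V_s)^*(1-V_s)$, which is already a sum of squares. For $\Delta$ and the differences, which carry non-constant weights $m(x,y)$, use the diagonal multiplication operators $D_s:=\chi_{\{x:\,sx\ne x\}}$, together with bounded diagonal weight functions $w_s(x)$ chosen so that
\[
\sum_{s:\,sx=y}w_s(x)=\frac{m(x,y)-1}{m(x,y)}
\quad\text{(resp. } \frac{|\widetilde S|-m(x,y)}{m(x,y)}\text{)}.
\]
For instance one can take $w_s(x)=c(x,sx)/m(x,sx)$. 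Then each difference equals $\tfrac12\sum_s(1-V_s)^*w_s(1-V_s)$. Since $w_s\ge0$ is a bounded diagonal operator, $w_s=(w_s^{1/2})^2$, and $w_s^{1/2}(1-V_s)\in\mathbb{C}_u$, so each term has the form $T_s^*T_s$.

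With the differences written as $\sum_sT_s^*T_s$, applying $\pi$ gives $\sum_s\pi(T_s)^*\pi(T_s)\ge0$, which is exactly \eqref{multiple edge}. The main obstacle is this bookkeeping: checking that the weighted expression reproduces the coefficients $m(x,y)-1$ and $|\widetilde S|-m(x,y)$ precisely, including the handling of loops and the symmetry $m(x,y)=m(y,x)$. Everything else is formal.
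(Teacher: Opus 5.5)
Your argument is correct in substance, but it is organized differently from the paper's proof.

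\textbf{Comparison with the paper.} The paper sorts the edges by multiplicity, $E_k=\{(x,y):x\neq y,\ m(x,y)=k\}$. It then uses Willett--Yu's Lemma~2.7 to cut each $E_k$ into finitely many partial translations $t_k^{(i)}$, and writes $\lambda(\Delta_H)-\Delta=\sum_{k,i}(k-1)(vv^*-v)^*(vv^*-v)$ with $v=v_k^{(i)}$. Since the weight $k-1$ is constant on each piece, no diagonal factors are needed. You instead let the generators do the work. The unitaries $V_s=\lambda(s)$ already enumerate all edges of the Schreier graph, and you absorb the non-constant multiplicities into bounded diagonal weights inside $(1-V_s)^*w_s(1-V_s)$. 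This avoids the decomposition lemma and gives an explicit sum of $|\widetilde S|$ squares in $\mathbb C_u$. The cost is that it relies on the Schreier structure, whereas the paper's argument applies verbatim to any bounded-degree multigraph.

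\textbf{Bookkeeping to fix.} Two pieces of the bookkeeping you flagged are wrong as written.

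First, the weight is read at the wrong endpoint. With $V_s\delta_x=\delta_{sx}$ one gets $(1-V_s)^*w_s(1-V_s)=\sum_x w_s(sx)\,P_{x,sx}$, so the weight is evaluated at $sx$. Your choice $w_s(x)=c(x,sx)/m(x,sx)$ would then put $c(sx,s^2x)/m(sx,s^2x)$ on the edge $\{x,sx\}$, which is not what you want in general. Either define $w_s(y):=c(s^{-1}y,y)/m(s^{-1}y,y)$, or use $T_s=w_s^{1/2}(1-V_s^*)$, for which $T_s^*T_s=\sum_x w_s(x)P_{x,sx}$.

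Second, the displayed normalization is off. Write $\tilde w_s(x)$ for the coefficient of $P_{x,sx}$ in $T_s^*T_s$. The condition you need is $\sum_{s:\,sx=y}\tilde w_s(x)=c(x,y)$, that is, $m(x,y)-1$ (resp.\ $|\widetilde S|-m(x,y)$), not $c(x,y)/m(x,y)$. With this condition, $\tfrac12\sum_sT_s^*T_s=\tfrac12\sum_{(x,y),\,m(x,y)>0}c(x,y)P_{x,y}$, which is exactly the required difference. Your explicit weights $c/m$ already satisfy it, since exactly $m(x,y)$ generators carry the edge $(x,y)$; only the display needs correcting.

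The cutoff $D_s$ is superfluous, because $P_{x,x}=0$. With these corrections, both differences are of the form $\tfrac12\sum_s T_s^*T_s$ with $T_s\in\mathbb C_u$, and the lemma follows for every $*$-homomorphism $\pi$.
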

\begin{proof}
For each $k=1,2,\cdots |\widetilde{S}|$, we define
\begin{align*}
    E_k:=\bigsqcup_n\{(x,y)\in Z_n\times Z_n:x\neq y, \; m(x,y)=k\},
\end{align*}
By \cite[Lemma~2.7]{Willett2014Geometric}, we can decompose $E_k$ as
\[
E_k
=
E_k^{(1)}\sqcup \cdots \sqcup E_k^{(N)}
\]
 for some $N$ such that, for each $i=1,\cdots,N$, there is a bijection
\[
t_k^{(i)}:A_k^{(i)}\to B_k^{(i)}
\]
with $A_k^{(i)}\cap B_k^{(i)}=\emptyset$ and
\[
E_k^{(i)}
=
\{(x,t_k^{(i)}(x)):x\in A_k^{(i)}\}
\sqcup
\{(t_k^{(i)}(x),x):x\in A_k^{(i)}\}.
\]
Let $v_k^{(i)}\in\mathbb C_u\left[\bigsqcup_n(Z_n,d_H)\right]$ be the partial isometry associated to $t_k^{(i)}$.
Then we have that
\begin{align*}
    \lambda(\Delta_H)-\Delta&=\frac{1}{2}
\sum_{\substack{(x,y)\\x\neq y,\; m(x,y)>0}} (m(x,y)-1)(e_{x,x}-e_{x,y}-e_{y,x}+e_{y,y})\\
    &=\frac{1}{2}\sum_{k=1}^{|\widetilde{S}|}
\sum_{\substack{(x,y)\\x\neq y,\;m(x,y)=k}} (k-1) (e_{x,x}-e_{x,y}-e_{y,x}+e_{y,y})\\
    &=\sum_{k=1}^{|\widetilde{S}|} \sum_{i=1}^N (k-1)(v_k^{(i)}(v_k^{(i)})^*-v_k^{(i)})^*(v_k^{(i)}(v_k^{(i)})^*-v_k^{(i)}).
\end{align*}
Therefore, for any $*$-homomorphism $\pi$ of $\mathbb{C}_u\left[\bigsqcup (Z_n,d_H)\right]$, we have 
\begin{align*}
    \pi(\lambda(\Delta_H)-\Delta)=\sum_{k=1}^{|\widetilde{S}|} \sum_{i=1}^N (k-1)\pi((v_k^{(i)}(v_k^{(i)})^*-v_k^{(i)}))^*\pi((v_k^{(i)}(v_k^{(i)})^*-v_k^{(i)}))\geq 0.
\end{align*}
Similarly, $|\widetilde S|\pi(\Delta)-\pi(\lambda(\Delta_H))\geq0$.
This proves \eqref{multiple edge}.
\end{proof}

Now, we complete the proof of Theorem~\ref{positive example},

\begin{proof}[Proof of Theorem~\ref{positive example}]
    By Lemma~\ref{lem:multigraph-simple}, it suffices to show that there exists $\delta>0$ such that
$$\sigma(\pi(\lambda(\Delta_H)))\subseteq \{0\}\cup [\delta,\infty)$$ for every unital $*$-homomorphism $\pi:\mathbb C_u\left[\bigsqcup_n (Z_n,d_H)\right]\to B(\mathcal H)$. 
Since $H$ has Kazhdan's property (T) for $m\geq 3$ \cite[Example~1.7.4]{KazhdanPropertyT}, 
there exists $\delta>0$ such that for every unital $*$-homomorphism $\rho:\mathbb C[H]\to B(\mathcal H)$, we have $\sigma(\rho(\Delta_H))\subseteq \{0\}\cup [\delta,\infty)$.
The composition
\[
\pi\circ \lambda:\mathbb C[H]\to B(\mathcal H)
\]
is a  unital $*$-homomorphism of $\mathbb{C}[H]$ and so we have that $\sigma(\pi(\lambda(\Delta_H)))\subseteq \{0\}\cup [\delta,\infty)$.
\end{proof}

\subsection{The case where $G$ does not have Kazhdan's property (T)}

In this subsection, we prove the following:

\begin{thm}\label{if not kazhdan}
    Let $G$ be a finitely generated group and $\alpha:G\curvearrowright M$ be a free and probability measure preserving Lipschitz action on a compact Riemannian manifold $M$. If $G$ does not have Kazhdan's property (T), then the warped cone $\bigsqcup (M,\delta_G^{t(n)})$ associated to $G\curvearrowright M$ does not have geometric property (T) for any sequence of scaling parameters $\{t(n)\}_n$ with $\lim t(n) \to \infty$.
\end{thm}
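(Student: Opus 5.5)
Since $G$ lacks property (T), there is a unitary representation $(\rho,\mathcal K)$ of $G$ with almost invariant vectors but no nonzero invariant vectors. Equivalently, $\rho(\Delta_G)$ has $0$ in its spectrum but $0$ is not an eigenvalue. The plan is to feed this representation into the crossed product picture from Section~3. That picture was set up for all free Lipschitz actions, and in particular the bottom isomorphism $(\ControlledSupport[Y]/I)\rtimes_{\alg}G\cong \ControlledSupport[X]/I$ holds in this generality. I will build a representation of $\ControlledSupport[X]$, vanishing on $I$, in which $\Delta_{E_r}$ has spectrum accumulating at $0$ without $0$ being isolated. The first step is to replace the identity of Lemma~\ref{local and group laplacians}, which was stated for isometric actions, by a coarse comparison valid for Lipschitz p.m.p.\ actions. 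Using the coarse generating sets $E_r$ of Lemma~\ref{Coarse Generating}, I expect $\Delta_{E_r}$ to be dominated, up to positive constants, by an element of the form $L_r\otimes 1 + \Psi(1\otimes\Delta_G)$. Here $\Psi(1\otimes\Delta_G)$ is the operator $\xi\mapsto\sum_{s}(\xi-\xi\circ s)$ on $\bigoplus L^2(M,\mu_{t(n)})$, which is unitary-like because the action preserves $\mu$. The domination should hold in every representation, by the same sums-of-squares argument as in Lemma~\ref{lem:multigraph-simple}.

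Next I choose a representation of $\overline{\ControlledSupport[Y]}^{L^2}/\overline{I}$ that kills $L_r$ and is $G$-equivariant. I take a character-like state: for each $n$, pick a unit vector $f_n\in L^2(M,\mu_{t(n)})$ that is almost $L_{r,n}$-invariant, for instance a normalized constant function, or better a bump spread over scale much larger than $1/t(n)$, so that $\langle L_{r,n}f_n,f_n\rangle\to0$. Passing to a free ultrafilter $\omega$ gives a state $\tau(T)=\lim_\omega\langle T f_n,f_n\rangle$ on $\overline{\ControlledSupport[Y]}^{L^2}$ vanishing on $\overline I$ with $\tau(L_r)=0$. The problem is that $\tau$ is not $G$-invariant. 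To fix this, I will instead use the commutative quotient: the multiplication algebra $L^\infty$ plus $L_r$. The cleanest route is to take the GNS representation $(\pi_\tau,\HH_\tau)$ and tensor it with $\rho$. I define the covariant pair $(\pi_\tau\circ\text{(average)}\otimes 1,\ \text{induced}\otimes\rho)$ via the induced representation $\ell^2(G,\HH_\tau)\otimes\mathcal K$ with $\pi(T)(\delta_g\otimes v)=\delta_g\otimes\pi_\tau(\ad_{g^{-1}}T)v$ and $U_h=\lambda_h\otimes\rho_h$. This is a genuine covariant system. Since $L_r$ is $G$-equivariant only up to bounded distortion for Lipschitz actions, $\pi(L_r)$ still vanishes on vectors $\delta_g\otimes\Omega_\tau\otimes v$ up to an error tending to $0$, uniformly for $g$ in a finite set. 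Vectors $\sum_{g\in F}\delta_g\otimes\Omega\otimes\rho_g v$, with $F$ a large Følner-like finite set, are then almost invariant for $\pi(L_r)+\widetilde\pi(1\otimes\Delta_G)$. I suspect amenability issues arise here, so the preferable choice is to take $f_n$ itself $G$-almost invariant, such as the constant function $1/\sqrt{\mu_{t(n)}(M)}$. Then $\tau$ is $G$-invariant ($\mu$-preserving), $\tau(L_r)=0$, and the simpler pair $(\pi_\tau\otimes 1,\ u_\tau\otimes\rho)$ on $\HH_\tau\otimes\mathcal K$ works. Here $u_\tau$ implements $\ad$ on $\HH_\tau$ with $\Omega_\tau$ fixed. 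The vectors $\Omega_\tau\otimes v_k$, with $v_k$ almost $\rho$-invariant, give $\langle\widetilde\pi(\widetilde\Delta)\,\Omega\otimes v_k,\Omega\otimes v_k\rangle\to0$.

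It remains to exclude an isolated eigenvalue $0$, that is, to show that the kernel of $\widetilde\pi(\widetilde\Delta)$ does not absorb these almost-kernel vectors. Any kernel vector must be fixed by $u_\tau\otimes\rho$. I will show that $\HH_\tau\otimes\mathcal K$ has no nonzero $G$-fixed vectors by arguing that $u_\tau$ is weakly contained in a multiple of the Koopman-type representation, where freeness of the action enters. By freeness and the isomorphism with the crossed product of the quotient, $u_\tau$ restricted to $\overline{\pi_\tau(\overline{\ControlledSupport[Y]})\Omega}$ should be a multiple of the regular representation away from the fixed vector. Then $u_\tau\otimes\rho\cong\rho\oplus(\text{multiple of }\lambda\otimes\rho)$, and $\lambda\otimes\rho$ is a multiple of $\lambda$. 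A fixed vector would then force either a $\rho$-invariant vector, which is excluded, or $G$ finite, which is excluded since finite groups have (T). I expect this identification of the $G$-representation on the GNS space, and proving that it contains no fixed vectors beyond $\mathbb C\Omega$ tensored trivially, to be the main obstacle. Freeness of the action is essential exactly at this step, together with transferring the result through $\Psi$ to an honest $*$-representation of $\ControlledSupport[X]$ via the diagram above.
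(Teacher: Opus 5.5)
Your final construction is the paper's: the state $\tau(T)=\lim_\omega\langle Tf_n,f_n\rangle$ with $f_n$ the normalized constants, its GNS space with the $u_\tau$-fixed vector $\Omega$, the covariant pair $(\pi_\tau\otimes 1,u_\tau\otimes\rho)$, and the test vectors $\Omega\otimes v_k$. The gap is in the step you flag as the main obstacle.

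\textbf{The claim about $u_\tau$ is false.} It is not true that $u_\tau$ is a multiple of the regular representation on $\Omega^\perp$, freeness notwithstanding. Take $f\in L^\infty(M)$ and use it at every level $n$. Then $\langle[M_f\Omega],[M_h\Omega]\rangle=\mu(M)^{-1}\int_M\overline{f}h\,d\mu$ and $u_\tau(g)[M_f\Omega]=[M_{f\circ g^{-1}}\Omega]$. So $\mathcal H_\tau$ contains the Koopman representation of $G$ on $L^2(M,\mu)$, and for isometric actions this is a sum of finite-dimensional representations.

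Concretely, let $\mathbb Z$ act on $S^1$ by an irrational rotation $\theta$. Let $\rho=\bigoplus_{k\neq0}\chi_k$ with $\chi_k(n)=e^{2\pi ikn\theta}$; it has almost invariant vectors and no invariant vectors. Then $[M_{e^{2\pi ijx}}\Omega]\otimes e_j$ is $u_\tau\otimes\rho$-fixed. It even lies in the kernel of the represented Laplacian, because $L_{r,n}$ acts on $e^{2\pi ijx}$ by a scalar tending to $0$. So the trivial kernel you aim for fails for general $\rho$, and the decomposition $u_\tau\otimes\rho\cong\rho\oplus(\text{multiple of }\lambda)$ cannot be established.

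\textbf{The missing idea.} You never need the kernel to be trivial, only that $\Omega\otimes\mathcal K$ is orthogonal to it.

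First, kernel vectors are fixed by $u_\tau\otimes\rho$. You assert this; justify it. By \cite[Proposition~7.9]{Jeroen2021Geometric}, kernel vectors are annihilated by $q(A-\Phi(A))$ for all admissible $A$. The choice $A=\lambda(g)$ has $\Phi(\lambda(g))=1$ because the action preserves $\mu$.

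Second, since $\Omega$ is $u_\tau$-fixed, $P_\Omega\otimes 1$ commutes with $u_\tau\otimes\rho$. So for $\xi$ in the kernel, $(P_\Omega\otimes1)\xi=\Omega\otimes w$ with $w$ $\rho$-fixed. Hence $w=0$ and $\xi\perp\Omega\otimes\mathcal K$. This is exactly Lemma~\ref{tensor}(ii). It uses nothing about $u_\tau$ beyond $\Omega$; freeness enters only through the isomorphism $\Psi$.

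You could alternatively rescue your stronger claim by choosing $\rho$ weakly mixing, which exists for groups without (T) by Bekka--Valette. That route is heavier.

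\textbf{Unnecessary steps.} The planned domination of $\Delta_{E_r}$ by $L_r\otimes1+\Psi(1\otimes\Delta_G)$ and the induced-representation detour are not needed. $\Delta_{E_r}$ annihilates constants exactly. So, writing $q(\Delta_{E_r})=\sum_{g\in F}A_g g$, you get $\sum_{g}\pi_\tau(A_g)\Omega=0$. Hence $\widetilde\pi(q(\Delta_{E_r}))(\Omega\otimes v_k)=\sum_{g\in F}\pi_\tau(A_g)\Omega\otimes(\rho_g v_k-v_k)\to0$, with no comparison inequality required.
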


We use the same notation as in Section~3. In particular, $X=\bigsqcup(X_{t(n)},\delta^{t(n)}_G)$ and $Y=\bigsqcup(X_{t(n)},d^{t(n)})$ are coarse disjoint unions of copies of $M$ with warped and unwarped distance, respectively. Let $I:=\bigoplus_n B(L^2(M,\mu_{t(n)}))$ be an ideal in $\ControlledSupport[X]$ and the quotient map is denoted by $q:\ControlledSupport[X]\to \ControlledSupport[X]/I$. Since the action is free, we still have the isomorphism 
\begin{align}\label{isom crossed}
    \Psi:\ControlledSupport[Y]/I\rtimes_{\alg} G
    \longrightarrow
    \ControlledSupport[X]/I.
\end{align}

To prove the theorem, we construct a representation of the crossed
product in which the image of the coarse Laplacian has no spectral gap.
Our strategy is to tensor a covariant system containing a
\(G\)-invariant unit vector with a unitary representation of \(G\)
having almost invariant unit vectors but no nonzero invariant vectors.
 We denote by $\Delta:=\Delta_{E_r}$ the Laplacian defined in Lemma~\ref{Coarse Generating}. Let $\xi_n:=\frac{1}{\sqrt{\mu_{t(n)}(M)}}1_{Y_{t(n)}}$ be the normalized constant vector in $L^2(Y_{t(n)},\mu_{t(n)})$. We define a state $\omega_n$ on $\prod B(L^2(Y_n,\mu_{t(n)}))$ by
\begin{align*}
    \omega_n(T):=\langle \xi_n,T\xi_n \rangle=\langle \xi_n,T_n\xi_n \rangle
\end{align*}
for $T=(T_n)_n \in \prod B(L^2(Y_n,\mu_{t(n)}))$. Then we define a state $\omega$ by $\omega(T):=\varinjlim_{\mathcal{U}} \omega_n(T)$ for any non-principal ultrafilter $\mathcal{U}$. The state \(\omega\) descends to the quotient by \(\overline I\),
and we denote the induced state by $\omega$ again.
Let \((\mathcal H,\rho,\Omega)\) be the associated GNS representation.
We also denote by $\lambda$ the unitary representation $G\to U(L^2(Y))$ induced by the probability measure preserving action $\alpha$. Then for every $g\in G$, $[T\Omega]\mapsto [(\lambda(g)T)\Omega]$ defines a unitary representation  $U:G\to U(\HH)$. We restrict the representation $\rho$ to 
\begin{align*}
    \overline{\ControlledSupport[Y]}^{L^2}/\overline{I} \subseteq \prod B(L^2(Y_n,\mu_{t(n)}))/\overline{I},
\end{align*}
and still denote by $\rho$.
 Note that $(\rho, U,\HH)$ is a covariant system of the $C^*$-dynamical system $\ad:G\curvearrowright \left(\overline{\ControlledSupport[Y]}^{L^2}/\overline{I}\right)$. Now let $\sigma:G\to U(\HH')$ be an arbitrary unitary representation of $G$ and consider the $*$-homomorphism and unitary representation
\begin{align*}
    \rho\otimes I&:\left(\overline{\ControlledSupport[Y]}^{L^2}/\overline{I}\right)\to B(\HH\otimes \HH')\\
    U\otimes\sigma&:G\to U(\HH\otimes \HH').
\end{align*}
The triple $(\rho\otimes I, U\otimes \sigma, \HH\otimes\HH')$ is a covariant system and the associated representation on the algebraic crossed product $\left(\overline{\ControlledSupport[Y]}^{L^2}/\overline{I}\right)\rtimes_{\alg} G$ is denoted by $\widetilde{\rho}$. Since $\Delta$ has controlled support, we can write
\begin{align*}
    q(\Delta):=\sum_{g\in F} A_g g \in \ControlledSupport[X]/I \cong \left(\ControlledSupport[Y]/I\right)\rtimes_{\alg} G
\end{align*}
for a finite subset $F\subseteq G$ and $A_g \in \ControlledSupport[Y]/I$.

\begin{lem}\label{tensor}
In the above setting we have the following. 
\begin{enumerate}[i)]
    \item If there is a sequence of unit vectors $(\eta_n)_n$ in $\HH'$ with $\|\eta_n-\sigma(g)\eta_n\|\to 0$ for every $g\in G$, then we have that 
    \begin{align*}
        \|\widetilde{\rho}(q(\Delta))(\Omega\otimes \eta_n)\|\to 0.
    \end{align*}

    \item If $\HH'$ does not have $\sigma$-invariant vectors, then for any $\eta\in \HH'$, we have that 
    \begin{align*}
        \Omega\otimes \eta\perp \ker(\widetilde{\rho}(q(\Delta)))
    \end{align*}
\end{enumerate}
\end{lem}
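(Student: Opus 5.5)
The plan rests on two observations. First, the cyclic vector $\Omega$ is $U$-invariant and is annihilated by the image of $\Delta$. Second, vectors in $\ker(\widetilde\rho(q(\Delta)))$ are $(U\otimes\sigma)$-invariant. For the first, $\omega(\lambda(g))=\lim_{\mathcal U}\langle\xi_n,\lambda(g)\xi_n\rangle=1$, because the action is measure preserving and so $\lambda(g)\xi_n=\xi_n$. Hence
\[
\|U(g)\Omega-\Omega\|^2=\omega\bigl((\lambda(g)-1)^*(\lambda(g)-1)\bigr)=0,
\]
so $U(g)\Omega=\Omega$. Likewise $\Delta_{E_r}$ kills constants, so $\Delta\xi_n=0$ for every $n$.

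For (i), write $q(\Delta)=\sum_{g\in F}A_g g$. Then
\[
\widetilde\rho(q(\Delta))(\Omega\otimes\eta_n)=\sum_{g\in F}\rho(A_g)U(g)\Omega\otimes\sigma(g)\eta_n .
\]
I split $\sigma(g)\eta_n=\eta_n+(\sigma(g)\eta_n-\eta_n)$. The first part gives $\bigl(\sum_g\rho(A_g)U(g)\Omega\bigr)\otimes\eta_n$. By the definition of $U$, we have $\rho(A_g)U(g)\Omega=[A_g\lambda(g)]\Omega$. Since $\Psi$ sends $A_g g$ to $A_g\lambda(g)$ modulo $I$, the sum equals $[\Delta]\Omega$, whose squared norm is
\[
\lim_{\mathcal U}\omega_n(\Delta^*\Delta)=\lim_{\mathcal U}\|\Delta\xi_n\|^2=0.
\]
The remaining part has norm at most $\sum_{g\in F}\|\rho(A_g)\|\,\|\sigma(g)\eta_n-\eta_n\|$, which tends to $0$ because $F$ is finite.

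For (ii), let $\zeta\in\ker\widetilde\rho(q(\Delta))$. The first step is to show that $\zeta$ is $(U\otimes\sigma)(s)$-invariant for every $s\in S$. Let $\lambda(s)\in\ControlledSupport[X]$ be the unitary of the action, which has controlled support for the warped metric. I will use the standard fact behind \cite[Proposition~7.9]{Jeroen2021Geometric} and \cite{Willett2014Geometric}: for a coarse generating set $E_r$ and any operator $v$ with controlled support, there is $C>0$ with
\[
(1-v)^*(1-v)\le C\,\Delta_{E_r}
\]
in every representation (domination of edge Laplacians by the coarse Laplacian). Applying this with $v=\lambda(s)$ and passing to the quotient and to $\widetilde\rho$, and using that $q(\lambda(s))$ corresponds to $1\cdot s$ in the crossed product, gives $\widetilde\rho(q(\lambda(s)))=(U\otimes\sigma)(s)$. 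Then $\widetilde\rho(q(\Delta))\zeta=0$ forces $(U\otimes\sigma)(s)\zeta=\zeta$ for all $s\in S$, so $\zeta$ is invariant under all of $G$.

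Next, define the bounded map $T:\HH\otimes\HH'\to\HH'$ by $T=\langle\Omega,\cdot\rangle\otimes\mathrm{id}$, which is the adjoint of $\eta\mapsto\Omega\otimes\eta$. Since $U(g)^*\Omega=\Omega$, we get $T(U\otimes\sigma)(g)=\sigma(g)T$. Hence $T\zeta$ is $\sigma$-invariant, and therefore $T\zeta=0$ because $\HH'$ has no nonzero invariant vectors. It follows that $\langle\Omega\otimes\eta,\zeta\rangle=\langle\eta,T\zeta\rangle=0$.

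I expect the main obstacle to be the domination inequality $(1-\lambda(s))^*(1-\lambda(s))\le C\Delta_{E_r}$, holding uniformly in all representations of $\ControlledSupport[X]$. I would prove it as in \cite[Lemma~2.7]{Willett2014Geometric}. First decompose the support of $\lambda(s)$ into finitely many partial translations within $E_r^{\circ N}$. Then write each edge difference as a telescoping sum along $E_r$-paths of length $N$, and bound it via the Cauchy--Schwarz inequality for sums of squares of elements $v v^*-v$, as in the proof of Lemma~\ref{lem:multigraph-simple}. If that becomes messy, a fallback is to use only the kernel statement, that $\ker\Delta_{E_r}$ lies in the common fixed space of controlled partial translations in any representation, which is exactly what the argument needs.
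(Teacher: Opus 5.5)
Your proposal is correct and is essentially the paper's proof: part (i) is the same computation, and in part (ii) the paper also first shows that every vector in $\ker(\widetilde{\rho}(q(\Delta)))$ is $(U\otimes\sigma)$-invariant, using the kernel characterization of \cite[Proposition~7.9]{Jeroen2021Geometric} with $A=\lambda(g)$ and $\Phi(\lambda(g))=1$ (exactly your fallback), and then kills the $\Omega$-component with $P_{\Omega}\otimes I$, which is your map $T$ in another form. One caution: your domination inequality cannot hold for arbitrary controlled $v$ (take $v=0$), so state it only for unitaries such as $\lambda(s)$ with $\Phi(v)=1$; and since your discrete telescoping sketch does not carry over verbatim to this non-discrete setting, citing the kernel statement, as the paper does, is the safer route.
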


\begin{proof}
\begin{enumerate}[i)]
    \item For each \(g\in F\), choose a lift
\(\widetilde A_g\in\mathbb C_{\mathrm{cs}}[Y]\) of \(A_g\).
Since each \(\xi_n\) is \(G\)-invariant, we have
    \begin{align*}
        \left\|\sum_{g\in F}\rho(A_g)\Omega\right\|^2=\varinjlim_{\mathcal{U}}\left\|\sum_{g\in F}\widetilde{A}_g\xi_n\right\|^2=\varinjlim_{\mathcal{U}}\left\|\sum_{g\in F}\widetilde{A}_g\lambda(g)\xi_n\right\|^2=\varinjlim_{\mathcal{U}}\|\Delta\xi_n\|^2=0.
    \end{align*}
    Therefore, 
    \begin{align*}
        \widetilde{\rho}(q(\Delta))(\Omega\otimes \eta_n)=\sum_{g\in F} \rho(A_g)\Omega \otimes \sigma(g)\eta_n=\sum_{g\in F} \rho(A_g)\Omega \otimes (\sigma(g)\eta_n-\eta_n) \to 0
    \end{align*}
    since $F\subseteq G$ is finite.
    \item
    As in \cite[Section~5]{Jeroen2021Geometric} (cf. \cite[Section~3]{Willett2014Geometric} for the discrete cases), for $T\in \ControlledSupport[X]$ supported on a controlled set $E$, we define a function $\Phi(T)$ by $\Phi(T)(x):=T1_{E_x}$.
    By \cite[Proposition~7.9]{Jeroen2021Geometric}, we have that
    \begin{align*}
        \ker (\widetilde{\rho}(q(\Delta)))=\left\{\xi\in \HH\otimes \HH':
        \begin{array}{l}
           \widetilde{\rho}(q(A-\Phi(A)))\xi=0 \\
             \forall A\in \ControlledSupport(X) \text{ such that }\Phi(A)\in L^{\infty}(X)
        \end{array}\right\}.
    \end{align*}
    We denote by $P_{\Omega}\in B(\HH)$ the orthogonal projection onto $\langle \Omega\rangle\subseteq \HH$. Take any $\xi \in \ker (\widetilde{\rho}(q(\Delta)))$ and write $(P_{\Omega}\otimes I)\xi=\Omega\otimes \xi'$ for some $\xi'\in \HH'$. Then since $\Phi(\lambda(g))=1$ for every $g\in G$, we have
    \begin{align*}
        \Omega\otimes \xi'=(P_{\Omega}\otimes 1)(U\otimes \sigma)(g) \xi=(U\otimes \sigma)(g)(P_{\Omega}\otimes 1)\xi=\Omega\otimes \sigma(g)\xi',
    \end{align*}
    where we used the fact that $U(g)P_{\Omega}=P_{\Omega}=P_{\Omega}U_g$ for all $g\in G$. This and the assumption that $\HH'$ has no invariant vectors imply that $\xi'=0$ and thus $(P_{\Omega}\otimes I)\xi=0$. Therefore,
    \begin{align*}
        \langle\xi,\Omega\otimes \eta \rangle=\langle\xi,(P_{\Omega}\otimes I)(\Omega\otimes \eta) \rangle=\langle(P_{\Omega}\otimes I)\xi,\Omega\otimes \eta \rangle=0.
    \end{align*}
\end{enumerate}
\end{proof}

Now we complete the proof of Theorem~\ref{if not kazhdan}.
\begin{proof}[Proof of Theorem~\ref{if not kazhdan}]
    Since we assume that $G$ does not have Kazhdan's property (T), there exists a unitary representation $\sigma$, which satisfies the assumptions of i) and ii) of Lemma~\ref{tensor}. Since $\Omega\otimes \eta_n$ are orthogonal to the kernel of $\widetilde{\rho}(q(\Delta))$ and satisfy $  \|\widetilde{\rho}(q(\Delta))(\Omega\otimes \eta_n)\|\to 0$, $\widetilde{\rho}(q(\Delta))$ does not have a spectral gap. By the isomorphism \eqref{isom crossed}, we can view $\widetilde{\rho}\circ q$ is a unital representation of $\ControlledSupport[X]$, in which
the image of $\Delta$ has no spectral gap. Therefore $X$ does not have geometric property (T).
\end{proof}

\section*{Acknowledgements}
The author thanks Jintao Deng for making comments on the paper.

\section*{AI Statement}
During the preparation of this work, the author used ChatGPT (OpenAI) for English-language editing and presentation, as well as for mathematical discussions concerning Section 4. The author reviewed and verified all mathematical content and takes full responsibility for the content of the article.
\bibliographystyle{alpha}
\bibliography{main}

\end{document}